\documentclass[11pt,reqno]{article}
\usepackage{amsfonts, amsthm, amsmath, amssymb}
\usepackage{graphicx}
\usepackage[english]{babel}
\usepackage[cp1251]{inputenc}

\textheight 21cm

\theoremstyle{plain}

\numberwithin{equation}{section}

\newtheorem {lemma} {Lemma}[section]
\newtheorem {theorem} {Theorem}
\newtheorem{proposition}{Proposition}[section]
\newtheorem*{thh}{Theorem}
\theoremstyle{remark}
\newtheorem{remark}{Remark}

\DeclareMathOperator{\tr}{tr}

\begin{document}

\title{Kerov's interlacing sequences and random matrices}

\author{Alexey Bufetov\thanks{Institute for Information Transmission Problems, Independent University of Moscow and Higher School of Economics, e-mail: \texttt{alexey.bufetov@gmail.com}} }

\date{}

\maketitle

\begin{abstract}
To a $N \times N$ real symmetric matrix Kerov assigns a piecewise linear function whose local minima are the eigenvalues of this matrix and whose local maxima are the eigenvalues of its $(N-1) \times (N-1)$ submatrix. We study the scaling limit of Kerov's piecewise linear functions for Wigner and Wishart matrices. For Wigner matrices the scaling limit is given by the Verhik-Kerov-Logan-Shepp curve which is known from asymptotic representation theory. For Wishart matrices the scaling limit is also explicitly found, and we explain its relation to the Marchenko-Pastur limit spectral law.
\end{abstract}

\section{Introduction}

Consider two sequences of real numbers $\{ x_i \}_{i=1}^n$, $\{ y_j \}_{j=1}^{n-1}$ such that
\begin{equation}
\label{interlace}
x_1 \ge y_1 \ge x_2 \ge \dots \ge x_{n-1} \ge y_{n-1} \ge x_{n}.
\end{equation}
We say that the sequences $\{ x_i \}_{i=1}^n$, $\{ y_j \}_{j=1}^{n-1}$ \emph{interlace}.
Define
\begin{equation*}
z_0 = \sum_{i=1}^n x_i - \sum_{j=1}^{n-1} y_j.
\end{equation*}

Following Kerov (see \cite{Ker}) we define a \emph{rectangular Young diagram} $w^{ \{x_i\}, \{y_j\} } (x)$ which is uniquely determined by the following conditions (see Figure \ref{separationYoungDiagr}):

\begin{figure}
\begin{center}
\includegraphics[height=6.5cm]{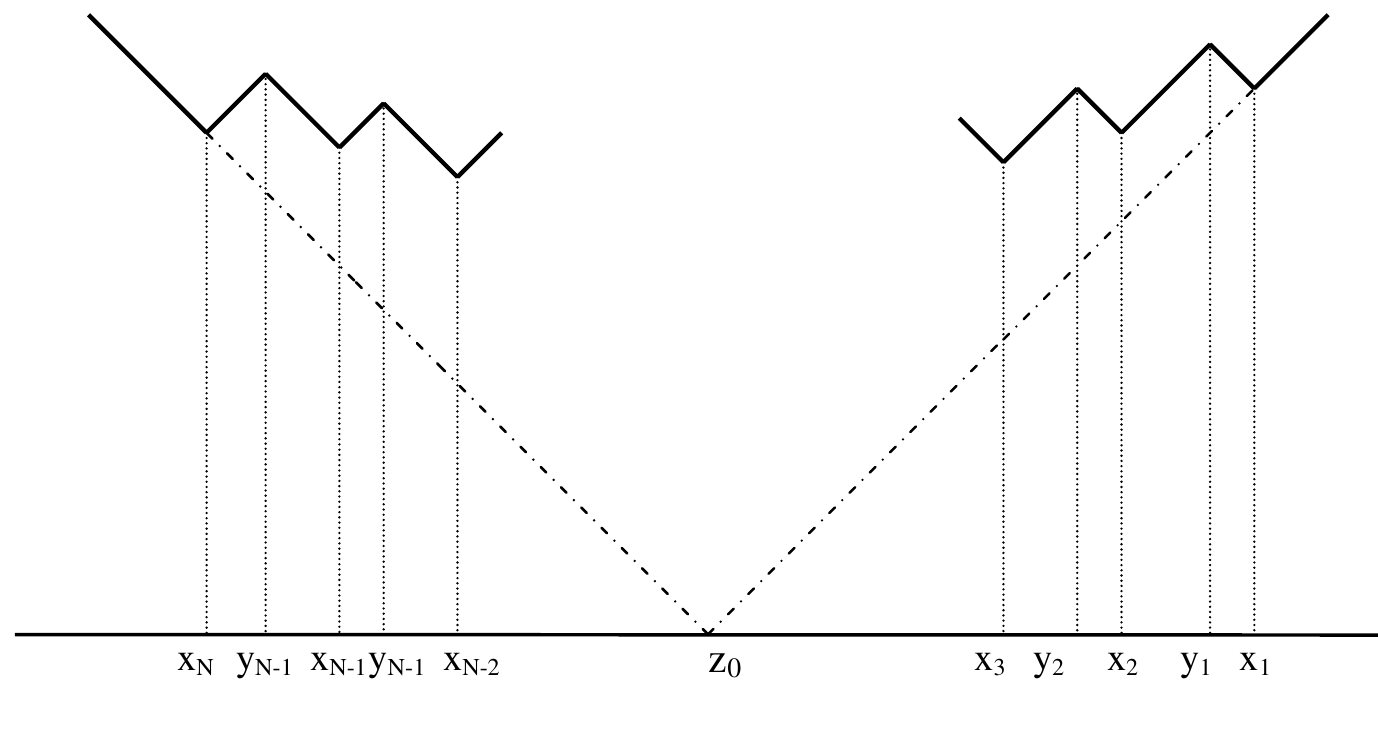}
\caption{A part of a rectangular Young diagram}
\label{separationYoungDiagr}
\end{center}
\end{figure}

1) $w^{\{x_i\},\{y_j\}} (x): \mathbb R \to \mathbb R$ is a continuous piecewise linear function, and $\dfrac{\partial}{\partial x} w^{\{x_i\},\{y_j\}} (x)= \pm 1$, except for finitely many points, which are exactly the local extrema of the function $w^{\{x_i\},\{y_j\}} (x)$.

2) $\{ x_i \}_{i=1}^n$ are local minima of $w^{\{x_i\},\{y_j\}} (x)$, $\{ y_j \}_{j=1}^{n-1}$ are local maxima of $w^{\{x_i\},\{y_j\}} (x)$, and there are no other local extrema.

3) $w^{\{x_i\},\{y_j\}} (x) = |x-z_0|$ when $|x|$ is large enough.

Let $S$ be a $N \times N$ real symmetric matrix. By $\hat S$ we denote its $(N-1) \times (N-1)$ submatrix;
it is obtained from $S$ by removing the $N$th row and column. It is well-known that the eigenvalues of $S$ and $\hat S$ interlace (see, e.g, \cite[p.185]{CauchyInter}).
Thus, to any symmetric matrix we can assign a rectangular Young diagram built from the eigenvalues of $S$ and $\hat S$.

Let $\{ Z_{ij} \}_{i,j=1}^{\infty}$ be a family of independent and identically distributed zero-mean, real-valued random variables such that $\mathbf E Z_{11}^2 =1$ and
\begin{equation*}
\mathbf E |Z_{11}|^k < \infty, \qquad \mbox{for all $k=1,2,3, \dots$.}
\end{equation*}
A symmetric $N \times N$ matrix $X_N$ with entries
\begin{equation*}
X_N(i,j) = X_N (j,i) = Z_{ij}, \qquad \mbox{for $i \le j$,}
\end{equation*}
is called a \emph{Wigner matrix}. Let $w_N^X (x)$ be the rectangular Young diagram constructed from the eigenvalues of $X_N$ and $\hat X_N$. Note that $w_N^X (x)$ is a random function. We are interested in the limit behaviour of $w_N^X (x)$ as $N \to \infty$.

Let
\begin{equation*}
\Omega(x) = \begin{cases}
\dfrac{2}{\pi} \left( x \arcsin (\frac{x}{2}) + \sqrt{4 - x^2} \right), \qquad & |x| \le 2, \\
|x|, \qquad & |x| \ge 2,
\end{cases}
\end{equation*}
be the Vershik-Kerov-Logan-Shepp curve (see \cite{VK1} and \cite{LS}).

\begin{theorem}
\label{th1}
As $N \to \infty$, we have
\begin{equation*}
\lim_{n \to \infty} \sup_{x \in \mathbb R} \left| \frac{1}{\sqrt{N}} w_N^X (x \sqrt{N}) - \Omega(x) \right| =0, \qquad \mbox{in probability.}
\end{equation*}
\end{theorem}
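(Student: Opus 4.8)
The plan is to pass through the \emph{transition measure} of Kerov's diagram, which for a symmetric matrix has a transparent spectral meaning. Denote by $\lambda_1\ge\cdots\ge\lambda_N$ the eigenvalues of $X_N$ (the local minima of $w_N^X$) and by $\nu_1\ge\cdots\ge\nu_{N-1}$ those of $\hat X_N$ (the local maxima). Since $w_N^X$ has slope $\pm1$ and $(w_N^X)''=2\bigl(\sum_i\delta_{\lambda_i}-\sum_j\delta_{\nu_j}\bigr)$ in the distributional sense, integrating twice gives the closed form
\begin{equation*}
w_N^X(x)=\sum_{i=1}^N|x-\lambda_i|-\sum_{j=1}^{N-1}|x-\nu_j|.
\end{equation*}
The associated transition measure $\mu_N$ is the probability measure whose Cauchy transform is $\prod_j(z-\nu_j)/\prod_i(z-\lambda_i)$; by Cramer's rule this equals the diagonal resolvent entry $\bigl[(z-X_N)^{-1}\bigr]_{NN}$, so $\mu_N$ is exactly the spectral measure of the coordinate vector $e_N$ with respect to $X_N$, and its moments are $\int t^k\,d\mu_N(t)=(X_N^k)_{NN}$. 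The first aim is to show that the rescaled transition measure $\tilde\mu_N$, the push-forward of $\mu_N$ under $t\mapsto t/\sqrt N$, converges weakly in probability to the semicircle law $\rho_{\mathrm{sc}}$ with density $\frac{1}{2\pi}\sqrt{4-t^2}$ on $[-2,2]$. The second aim is to transfer this to the convergence $\tilde w_N\to\Omega$, using that the continual diagram whose transition measure is $\rho_{\mathrm{sc}}$ is precisely $\Omega$.

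For the first aim I would use the method of moments, which is exactly why all moments of $Z_{11}$ are assumed finite. Writing $W_N=X_N/\sqrt N$, the $k$-th moment of $\tilde\mu_N$ is $(W_N^k)_{NN}=N^{-k/2}(X_N^k)_{NN}$, a weighted count of closed walks of length $k$ based at the fixed vertex $N$. The standard Wigner bookkeeping (only walks realizing a non-crossing pair partition of the steps survive in the limit) gives $\mathbf E\,(W_N^{2m})_{NN}\to C_m$ and $\mathbf E\,(W_N^{2m+1})_{NN}\to0$, where $C_m=\binom{2m}{m}/(m+1)$ are the Catalan numbers, i.e. the moments of $\rho_{\mathrm{sc}}$; a parallel second-moment computation shows $\mathrm{Var}\,(W_N^k)_{NN}=O(N^{-1})\to0$. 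Hence each moment of $\tilde\mu_N$ converges in probability to the corresponding moment of $\rho_{\mathrm{sc}}$, and since $\rho_{\mathrm{sc}}$ is compactly supported and determined by its moments, $\tilde\mu_N\to\rho_{\mathrm{sc}}$ weakly in probability.

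To transfer this to the profile I would invoke the continuity of Kerov's correspondence between continual diagrams and transition measures: weak convergence of transition measures supported in a fixed compact set forces uniform convergence of the associated profiles. Equivalently, and more explicitly, the rescaled Rayleigh measure $\tilde\tau_N=\sum_i\delta_{\lambda_i/\sqrt N}-\sum_j\delta_{\nu_j/\sqrt N}$ satisfies
\begin{equation*}
\int\frac{d\tilde\tau_N(u)}{z-u}=-\frac{d}{dz}\log \bigl[(z-W_N)^{-1}\bigr]_{NN}=-\frac{G_{\tilde\mu_N}'(z)}{G_{\tilde\mu_N}(z)}\xrightarrow[N\to\infty]{}-\frac{G_{\mathrm{sc}}'(z)}{G_{\mathrm{sc}}(z)}=\frac{1}{\sqrt{z^2-4}},
\end{equation*}
the Cauchy transform of the arcsine law. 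This is consistent with $\Omega$: differentiating its explicit formula gives $\Omega'(x)=\frac{2}{\pi}\arcsin(x/2)$ on $[-2,2]$, so the Rayleigh measure $\tfrac12\Omega''$ of $\Omega$ is the arcsine law $\frac{1}{\pi\sqrt{4-u^2}}\mathbf 1_{[-2,2]}$, whose Markov transform is exactly $\rho_{\mathrm{sc}}$. Thus $\tilde w_N(x)=\int|x-u|\,d\tilde\tau_N(u)$ converges pointwise to $\int|x-u|\,d\tau_\Omega(u)=\Omega(x)$.

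Finally I would upgrade pointwise convergence to uniform convergence on $\mathbb R$. Two ingredients make this routine once the measure convergence is in hand: every $\tilde w_N$ and $\Omega$ is $1$-Lipschitz, so the family is equicontinuous; and by convergence of the extreme eigenvalues of both $X_N$ and $\hat X_N$, all rescaled eigenvalues lie in a fixed compact set, say $[-3,3]$, with probability tending to $1$, on which event both $\tilde w_N(x)$ and $\Omega(x)$ coincide with $|x|$ for $|x|\ge3$. Pointwise convergence on a compact set together with uniform equicontinuity then yields uniform convergence there, and the tails contribute nothing. I expect the genuine difficulty to be concentrated in the transfer step: the profile is a linear statistic of the \emph{signed} measure $\tilde\tau_N$, a difference of two spectral measures each of total mass of order $N$, so its convergence hinges on the delicate near-cancellation encoded in the resolvent identity above, and care is needed to control this uniformly in $x$ and to ensure convergence in probability rather than merely in expectation.
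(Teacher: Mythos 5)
Your proposal is correct in substance, but it takes a genuinely different route from the paper's proof. The paper never touches the transition measure in Section 3: it works directly with the functionals $\tilde p_k(w_N^X)=\tr\left(X_N^k\right)-\tr\left(\hat X_N^k\right)$, i.e.\ the moments of the (rescaled) signed measure you call $\tilde\tau_N$, and observes that this difference of traces is a sum over closed walks of length $k$ \emph{passing through} the distinguished vertex $N$ somewhere. Marking one vertex of a Wigner word contributes an extra factor $(k/2+1)$, which turns the Catalan number into the central binomial coefficient $\frac{k!}{(k/2)!(k/2)!}$; by a lemma of Ivanov--Olshanski these are exactly $\tilde p_k(\Omega)$, and uniform convergence then follows from Lemma \ref{topology} (moment functionals induce the uniform topology on compactly supported Lipschitz functions), together with confinement of the spectrum and convergence of the center $X_N(N,N)/\sqrt N\to 0$. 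You instead compute the moments $(X_N^k)_{NN}$ of the transition measure (walks \emph{rooted} at $N$), identify the semicircle law as the limit, and transfer to the profile via the continuity of Kerov's correspondence $\mathcal M(I)\leftrightarrow\mathcal D(I)$, or equivalently via the logarithmic derivative of the diagonal resolvent entry. The two computations are equivalent under Kerov's Markov transform --- indeed, that the transition measure of $\Omega$ is semicircular is precisely part a) of the Proposition in Section 5 --- and both rest on the same Wigner-word bookkeeping. What your route buys is a cleaner limiting object (Catalan numbers, semicircle) and a direct bridge to Section 5; its cost is that you must invoke, or prove, the two-sided continuity of Kerov's correspondence on $\mathcal M([-3,3])$, which is true (and follows from the relation \eqref{KerovTransform} plus Lemma \ref{topology}, since the $\tilde p_k$ are polynomials in the moments $\mu_1,\dots,\mu_k$ and vice versa) but is exactly the step the paper sidesteps by feeding the $\tilde p_k$, which are already linear statistics of the profile, straight into the topology lemma.

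One small slip in your last paragraph: on the event that all rescaled eigenvalues of $X_N$ and $\hat X_N$ lie in $[-3,3]$, the rescaled profile equals $|x-c_N|$ for $|x|\ge 3$, where $c_N=X_N(N,N)/\sqrt N$ is the rescaled center of the diagram --- not $|x|$ exactly. Since $c_N\to 0$ in probability, the tail error is $\sup_{|x|\ge 3}\bigl|\,|x-c_N|-|x|\,\bigr|=|c_N|\to 0$, so the conclusion stands, but this correction is needed; the paper handles it explicitly by proving $z_N/\sqrt N\to 0$ in probability before invoking Lemma \ref{lemOutZero}.
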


\begin{figure}
\begin{center}
\includegraphics[height=6.5cm]{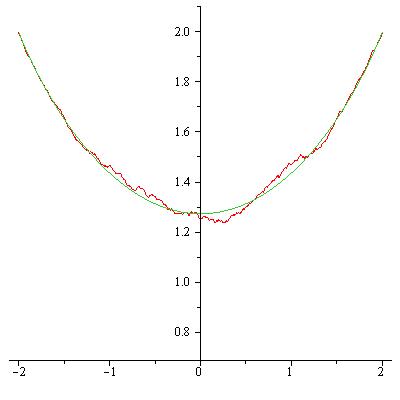}
\caption{The rectangular Young diagram $w_N(x)$ and $\Omega(x)$ for $N=1100$}
\label{separationWigner}
\end{center}
\end{figure}


Interlacing sequences arise naturally in several areas. They provide a useful parametrization of Young diagrams (see \cite{Ker4},\cite{IO}). They also appear as roots of two consecutive orthogonal polynomials (see \cite{Ker1}). A more general notion of \emph{interlacing measures} was studied in \cite{Ker3}.

For the first time the curve $\Omega(x)$ appeared from a representation theoretic problem. This curve is the limit shape of the random Young diagram distributed according to the Plancherel measure (see \cite{VK1}, \cite{LS}, \cite{VK2}, and \cite[Section 5]{IO} for more details).
Then it was found that $\Omega(x)$ is a scaling limit for separation of roots of orthogonal polynomials (see \cite{Ker1}). This curve also arises as the scaling limit in the evolution of continuous Young diagrams (see \cite{Ker2}) and in random matrix theory.
Let us formulate Kerov's result from \cite{Ker1} related to random matrix theory.


Let $h_N \subset \mathbb R^N$ be a random hyperplane such that $0 \in h_N$ and the normal vector to $h_N$ is uniformly distributed on the unit sphere. Let $p$ be the projection operator to $h_N$.
Regard $X_N$ as an operator in $\mathbb R^N$ and consider the operator $p X_N p$ in $h_N$.
The eigenvalues of $X_N$ and $p X_N p$ interlace. Let us construct the rectangular Young diagram $\tilde w_N$ from these eigenvalues as above.

\begin{thh}[\cite{Ker1}, Th. 3.6]
As $N \to \infty$, we have
\begin{equation}
\label{KerovTh}
\lim_{n \to \infty} \frac{1}{\sqrt{N}} \, \mathbf E \tilde w_N (x \sqrt{N}) = \Omega(x),
\end{equation}
and the limit is uniform on $x \in \mathbb R$.
\end{thh}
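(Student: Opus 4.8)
The plan is to encode the diagram $\tilde w_N$ by its \emph{transition measure} and to recognize the latter as an explicit spectral measure of $X_N$. To interlacing data with minima $\{x_i\}$ and maxima $\{y_j\}$ Kerov attaches the probability measure $\mu$ with Cauchy transform $\int(z-t)^{-1}\,\mu(dt)=\prod_j(z-y_j)\big/\prod_i(z-x_i)$, and the assignment $w\mapsto\mu$ (the Markov--Krein correspondence) is a homeomorphism in a topology under which weak convergence of transition measures matches uniform convergence of diagrams. Here the minima are the eigenvalues $\lambda_1,\dots,\lambda_N$ of $X_N$ and the maxima are the eigenvalues of $pX_Np$. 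Writing $u$ for the random unit normal to $h_N$ and diagonalizing $X_N$ in an orthonormal eigenbasis $v_1,\dots,v_N$, Cramer's rule (applied in the basis in which $u$ is a coordinate vector) gives $\prod_j(z-y_j)\big/\prod_i(z-\lambda_i)=u^{*}(zI-X_N)^{-1}u$, so the transition measure of $\tilde w_N$ is exactly the spectral measure $\mu_N=\sum_i|\langle u,v_i\rangle|^2\,\delta_{\lambda_i}$ of $X_N$ at $u$. The crucial point is that, since $u$ is uniform on the sphere and independent of $X_N$, rotational symmetry yields $\mathbf E_u|\langle u,v_i\rangle|^2=1/N$ for every $i$, whence $\mathbf E_u\mu_N=\frac1N\sum_i\delta_{\lambda_i}$ is \emph{exactly} the empirical spectral distribution of $X_N$.

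The limiting shape now follows from the semicircle law. Rescaling by $\sqrt N$ and averaging also over $X_N$, Wigner's theorem gives that $\frac1N\sum_i\delta_{\lambda_i/\sqrt N}$ converges weakly (in expectation) to the semicircle law $\rho_{sc}$ on $[-2,2]$, so the rescaled expected transition measure of $\tilde w_N$ converges to $\rho_{sc}$. It remains to check that $\rho_{sc}$ is the transition measure of $\Omega$: a direct computation of Cauchy transforms gives $G_{\rho_{sc}}(z)=\tfrac12(z-\sqrt{z^2-4})$, and the associated Rayleigh measure $\tau$, determined by $G_\mu(z)=\exp(-\int\log(z-x)\,d\tau(x))$, satisfies $\int(z-x)^{-1}d\tau(x)=-(\log G_{\rho_{sc}})'(z)=(z^2-4)^{-1/2}$, i.e.\ $\tau$ is the arcsine law on $[-2,2]$; since $w=\int|x-t|\,d\tau(t)$ with $\tau=\tfrac12 w''$, and $\tfrac12\Omega''(x)=\frac{1}{\pi\sqrt{4-x^2}}$ is the arcsine density on $[-2,2]$, one recovers $\Omega$ precisely. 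This identity also exposes the algebraic mechanism: in the companion deleted-coordinate problem the moments of the Rayleigh measure $\tau_N=\sum_i\delta_{\lambda_i}-\sum_j\delta_{\mu_j}$ are the trace differences $\tr X_N^k-\tr\hat X_N^k$, and the discrete derivative $N^{m+1}-(N-1)^{m+1}\sim(m+1)N^{m}$ turns the Catalan (semicircle) moments $C_m$ into the central binomial (arcsine) moments $(m+1)C_m=\binom{2m}{m}$, which is exactly why $\Omega$ appears.

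The remaining work is to convert weak convergence of the expected transition measures into the \emph{uniform} convergence of $\frac1{\sqrt N}\mathbf E\tilde w_N(x\sqrt N)$ to $\Omega(x)$, and this is where the main obstacles lie. First, the inverse Markov--Krein map $\mu\mapsto w$ is nonlinear, so the clean identity $\mathbf E_u\mu_N=\text{(empirical distribution)}$ does not by itself commute with the passage to the diagram; one handles this by concentration of the random spectral measure $\mu_N$ about its mean, which rests on the delocalization estimate $|\langle u,v_i\rangle|^2\approx 1/N$ valid for a uniformly random $u$. Second, uniformity over all of $\mathbb R$ requires control outside the bulk: one must rule out small excursions of $\tilde w_N$ above the line $y=|x|$ for $|x|>2$ caused by eigenvalues straying beyond the spectral edge $\pm2\sqrt N$, and one must show the centering $z_0/\sqrt N=u^{*}X_Nu/\sqrt N\to0$ (note $\mathbf E z_0=\frac1N\mathbf E\tr X_N=0$) so that the outer branch $|x-z_0|$ matches $|x|$. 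Both are handled by a priori bounds on the extreme eigenvalues of $X_N$, whose edge concentrates at $\pm2\sqrt N$ with rapidly decaying tails. I expect this edge and tail analysis --- rather than the algebra of the transition measure or the soft continuity of the correspondence --- to be the technically demanding part of the argument.
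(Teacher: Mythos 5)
First, a point of orientation: the paper itself does not prove this statement --- it is quoted from Kerov (\cite{Ker1}, Theorem 3.6) and used only for context. So your proposal can only be compared against Kerov's circle of ideas and against the paper's proof of the analogous Theorem \ref{th1} (the deleted row/column version), which is proved by a different method.

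Your plan is correct in outline, and all of its key identities check out: the ratio of characteristic polynomials of $pX_Np$ and $X_N$ equals $u^{*}(zI-X_N)^{-1}u$, so the transition measure of $\tilde w_N$ is the spectral measure $\sum_i |\langle u,v_i\rangle|^2 \delta_{\lambda_i}$; rotational invariance of $u$ gives $\mathbf E_u \mu_N = \frac1N\sum_i \delta_{\lambda_i}$ exactly; the semicircle law is indeed the transition measure of $\Omega$ (your Cauchy-transform computation $-(\log G_{\rho_{sc}})'=(z^2-4)^{-1/2}$ and the identification of $\frac12\Omega''$ with the arcsine density are both right, and agree with the paper's Section 5 and with Lemma \ref{lemma32}); and the center is $z_0=u^{*}X_Nu$, which is $O(1)$ and so vanishes after rescaling. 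You also correctly identify the one genuine subtlety: since $\mu\mapsto w$ is nonlinear, the clean identity for $\mathbf E_u\mu_N$ does not suffice, and one must prove concentration of $\mu_N$ (here this is elementary --- a variance bound for $u^{*}(X_N/\sqrt N)^k u$ over the uniform $u$, of order $O(1/N)$, not a delocalization property of the Wigner eigenvectors, which your wording slightly suggests) and then pass to expectations via tail bounds on the extreme eigenvalues. This differs from how the paper handles its own Theorem \ref{th1}: there one works linearly with the Rayleigh measure, $\tilde p_k(w_N^X)=\tr X_N^k - \tr \hat X_N^k$, computes these moments by marked Wigner-word combinatorics (the extra factor $k/2+1$ turning Catalan numbers into central binomial coefficients --- precisely the discrete-derivative mechanism you note), and concludes by the moment-to-uniform topology Lemma \ref{topology}, with no transition measures at all. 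Your route buys conceptual clarity (Wigner's theorem enters as a black box, and the appearance of $\Omega$ is explained by the arcsine Rayleigh measure of the semicircle law); the paper's moment route buys robustness and avoids invoking the Markov--Krein homeomorphism. One further simplification worth noting for the expectation-only statement you are proving: since $\tilde w_N(x)=\int |x-t|\,d\tau_N(t)$ with $\tau_N=\frac12 \tilde w_N''$ depending \emph{linearly} on the diagram, it suffices to show $\mathbf E\, \tilde p_k(\tilde w_N)/N^{k/2}\to \binom{k}{k/2}$ (even $k$), which can be done by a direct moment computation of $\mathbf E[\tr X_N^k-\tr(pX_Np)^k]$ and would let you bypass the concentration step entirely; concentration is only forced on you because you insist on routing through the (nonlinear) transition-measure correspondence.
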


\begin{remark}
In the context of Theorem \ref{th1} we consider the restriction to a fixed hyperplane
while here the hyperplane is random. Another difference is that Theorem \ref{th1} establishes the convergence in probability while \eqref{KerovTh} gives only the convergence of mean.
\end{remark}

Let us proceed to Wishart matrices. Let $M=M(N)$ be a sequence of positive integers such that
\begin{equation*}
\lim_{N \to \infty} \frac{M(N)}{N} = \alpha \ge 1.
\end{equation*}
Let $\mathcal W_N$ be a $N \times M(N)$ matrix with i.i.d. entries of mean zero and variance 1, and such that
\begin{equation*}
\mathbf E \mathcal |W_N(1,1)|^k < \infty, \qquad \mbox{for all $k=1,2,3, \dots$}.
\end{equation*}
By $Y_N$ we denote the $N \times N$ Wishart matrix $Y_N = \mathcal W_N \mathcal W_N^t$. Let $w^Y_N (x)$ be the rectangular Young diagram which is defined by the interlacing eigenvalues of $Y_N$ and $\hat Y_N$.

Let us define a continuous function $\Omega_{\alpha} : \mathbb R \to \mathbb R$. Set
\begin{equation*}
\Omega''_{\alpha} (x) = \frac{x+(\alpha - 1)}{\pi x \sqrt{ 4 \alpha - (x-(\alpha+1))^2}}, \qquad x \in [(\alpha+1) - 2 \sqrt{\alpha}; (\alpha+1) + 2 \sqrt{\alpha}],
\end{equation*}
and
\begin{equation*}
\Omega_{\alpha} (x) = |x-\alpha|, \ \ \ \ x \in (-\infty;(\alpha+1) - 2 \sqrt{\alpha}] \cup [(\alpha+1) + 2 \sqrt{\alpha}; \infty).
\end{equation*}
These formulas determine the function $\Omega_{\alpha} (x)$ uniquely.

\begin{theorem}
\label{th2}
As $N \to \infty$, we have
\begin{equation*}
\lim_{n \to \infty} \sup_{x \in \mathbb R} \left| \frac{1}{N} w^Y_N(x N) - \Omega_{\alpha} (x) \right| =0, \qquad \mbox{in probability.}
\end{equation*}
\end{theorem}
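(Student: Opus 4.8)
The plan is to route everything through Kerov's transition (Rayleigh) measure of the diagram, because — unlike the signed measure $\tfrac12(w_N^Y)''$, whose total variation grows like $2N$ — the transition measure is a genuine probability measure and therefore behaves well under weak convergence. If $\{\lambda_i\}_{i=1}^N=\mathrm{spec}(Y_N)$ and $\{\nu_j\}_{j=1}^{N-1}=\mathrm{spec}(\hat Y_N)$ are the interlacing sequences defining $w_N^Y$, the associated transition measure $\tau_N$ is characterized by
$$\int \frac{\tau_N(du)}{z-u}=\frac{\prod_j(z-\nu_j)}{\prod_i(z-\lambda_i)}=\frac{\det(zI-\hat Y_N)}{\det(zI-Y_N)}=\bigl[(zI-Y_N)^{-1}\bigr]_{NN}=:R_{NN}(z),$$
the last step by Cramer's rule. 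Thus $\tau_N=\sum_k|\langle e_N,u_k\rangle|^2\delta_{\lambda_k}$ is exactly the spectral measure of $Y_N$ at the coordinate vector $e_N$, a probability measure of mass $\|e_N\|^2=1$. Since by Kerov's correspondence the continuous diagram is a continuous functional of its transition measure, from the topology of weak convergence to that of uniform convergence, I would reduce the theorem to showing that the rescaled measure $\tilde\tau_N$ (the push-forward of $\tau_N$ under $u\mapsto u/N$) converges weakly, in probability, to the Marchenko-Pastur law $\rho_\alpha$ on $[(\sqrt\alpha-1)^2,(\sqrt\alpha+1)^2]$.

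The Cauchy transform of $\tilde\tau_N$ is $\tilde R_{NN}(z)=[(zI-Y_N/N)^{-1}]_{NN}$, and the core random-matrix step is to prove $\tilde R_{NN}(z)\to m_\alpha(z)$ in probability, locally uniformly off the real axis, where $m_\alpha$ is the Stieltjes transform of $\rho_\alpha$. Writing $B=Y_N/N$ and applying the Schur complement to the $(N,N)$ entry,
$$\tilde R_{NN}(z)=\frac{1}{\,z-B_{NN}-\mathbf b^{\,t}(zI-B^{(N)})^{-1}\mathbf b\,},$$
where $B^{(N)}=\hat Y_N/N$ and $\mathbf b$ is the last column of $B$ with its diagonal entry deleted. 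Here $B_{NN}=N^{-1}\|\mathbf w_N\|^2\to\alpha$ by the law of large numbers, while the quadratic form, in which the last row $\mathbf w_N$ of $\mathcal W_N$ is independent of $B^{(N)}$, should concentrate (Hanson-Wright / the standard trace lemma, using finiteness of all moments) around $N^{-1}\tr[(zI-B^{(N)})^{-1}B^{(N)}]$. The classical Marchenko-Pastur theorem for the spectrum of $B^{(N)}$ gives $N^{-1}\tr[(zI-B^{(N)})^{-1}B^{(N)}]\to -1+z\,m_\alpha(z)$, so the denominator tends to $z+1-\alpha-z\,m_\alpha(z)$, which equals $1/m_\alpha(z)$ by the self-consistent equation $z\,m_\alpha^2-(z+1-\alpha)m_\alpha+1=0$; hence $\tilde R_{NN}(z)\to m_\alpha(z)$. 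I would use the Bai-Yin bound to confine the largest eigenvalue of $B$ near $(\sqrt\alpha+1)^2$, which, with interlacing, keeps all supports of $\tilde\tau_N$ in a fixed compact set with high probability and supplies the tightness needed to upgrade Stieltjes-transform convergence to weak convergence of probability measures.

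Combining the two steps gives $\tilde\tau_N\to\rho_\alpha$ weakly in probability, and the continuity of Kerov's correspondence then yields $\sup_x|\tfrac1N w_N^Y(xN)-\Omega_\alpha(x)|\to0$ in probability, where $\Omega_\alpha$ is the diagram with transition measure $\rho_\alpha$. To match the stated description I would identify $\Omega_\alpha$ as follows: its rim measure $\tfrac12\Omega_\alpha''$ has Cauchy transform $-\frac{d}{dz}\log m_\alpha(z)$ (the limit of $-\frac{d}{dz}\log R_{NN}$); writing $1/m_\alpha(z)=\tfrac12\bigl((z+1-\alpha)+\sqrt{(z-(1+\alpha))^2-4\alpha}\bigr)$ and applying Stieltjes inversion gives, for $x$ in the support,
$$\frac12\Omega_\alpha''(x)=\frac{x+(\alpha-1)}{2\pi x\sqrt{4\alpha-(x-(\alpha+1))^2}},$$
which is the stated $\Omega_\alpha''$; outside the support $m_\alpha$ is real, the rim measure vanishes, and mass $1$ together with $\int u\,\rho_\alpha(du)=\alpha$ force $\Omega_\alpha(x)=|x-\alpha|$ there.

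I expect the main obstacle to be the random-matrix concentration of the \emph{single} diagonal resolvent entry $\tilde R_{NN}(z)$ in probability: unlike the normalized trace $N^{-1}\tr(zI-B)^{-1}$, an individual entry carries no averaging, so the entire control must come from the self-averaging of the quadratic form $\mathbf b^{\,t}(zI-B^{(N)})^{-1}\mathbf b$, which has to be made uniform enough in $z$ to pass to weak convergence. This is precisely the point where the present statement is stronger than Kerov's Theorem 3.6, which asserts only convergence of the mean. The remaining ingredients — the algebraic identification of $\tau_N$ with $R_{NN}$ and the Stieltjes-inversion computation of $\Omega_\alpha''$ — are routine, and the whole argument runs parallel to the proof of Theorem \ref{th1}, with the Marchenko-Pastur law and its self-consistent equation replacing the semicircle law.
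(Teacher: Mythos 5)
Your proposal is correct in substance but takes a genuinely different route from the paper. The paper never touches the transition measure in its proof of Theorem \ref{th2}: it works directly with the moments $\tilde p_k(w_N^Y) = \tr(Y_N^k) - \tr(Y_{N-1}^k)$ of the rim measure, evaluates their limits by the moment method --- a weighted Dyck-path count in which the statistics $a(\mathcal D_k)$, $b(\mathcal D_k)$ of descents from odd and even heights produce the contribution $(b(\mathcal D_k)+1)\alpha^{a(\mathcal D_k)}$ per path, and a generating-function recursion yields $G_\alpha(z)$ (Lemma \ref{lemma41}) --- then matches this against a Stieltjes-transform computation of $\tilde p_k(\Omega_\alpha)$ (Lemma \ref{omegaAlpha}), and concludes via the Ivanov--Olshanski topology lemma (Lemma \ref{topology}) together with compactness of the spectrum and convergence of the center $Y_N(N,N)/N \to \alpha$. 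You instead identify Kerov's transition measure of $w_N^Y$ with the spectral measure of $Y_N$ at $e_N$ (Cramer's rule), prove convergence of the rescaled diagonal resolvent entry to $m_\alpha$ by Schur complement, concentration of quadratic forms, and the classical Marchenko--Pastur theorem applied to $\hat Y_N/N$, and transport weak convergence of transition measures back to uniform convergence of diagrams. This replaces the paper's combinatorics by standard resolvent analysis, and it makes conceptually transparent why the Marchenko--Pastur law is the transition measure of $\Omega_\alpha$: the Proposition of Section 5, which in the paper is a remark after the fact, becomes the engine of your proof. Your algebra is consistent with the paper's formulas: the self-consistent equation $zm_\alpha^2-(z+1-\alpha)m_\alpha+1=0$, the identity $1/m_\alpha = \tfrac12\bigl((z+1-\alpha)+\sqrt{(z-(1+\alpha))^2-4\alpha}\bigr)$, and the Stieltjes inversion of $-\frac{d}{dz}\log m_\alpha$ reproducing $\Omega''_\alpha$ all check out, as does the identification of the center with $\int u\,\rho_\alpha(du)=\alpha$.

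The one step you should not treat as free is the ``continuity of Kerov's correspondence'' from weak convergence of transition measures to uniform convergence of diagrams. This is true on the set of measures and diagrams supported in a fixed compact interval --- and you do secure uniform compact support via Bai--Yin and interlacing --- but it requires either a citation of Kerov's homeomorphism theorem for the Markov transform, or a short derivation: weak convergence with uniformly compact supports gives convergence of the moments $\mu_k$; the triangular polynomial relations encoded in \eqref{KerovTransform} then convert this into convergence of $\tilde p_k(w_N^Y)/N^k$ to $\tilde p_k(\Omega_\alpha)$; and Lemma \ref{topology} finishes exactly as in the paper. With that supplied your argument is complete. Note also that the uniformity in $z$ you worry about is not actually needed: pointwise convergence in probability of the Stieltjes transforms at each fixed $z\in\mathbb C\setminus\mathbb R$, combined with the compact-support tightness, already upgrades to weak convergence in probability by a standard subsequence and normal-families argument.
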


\begin{figure}
\begin{center}
\includegraphics[height=6.5cm]{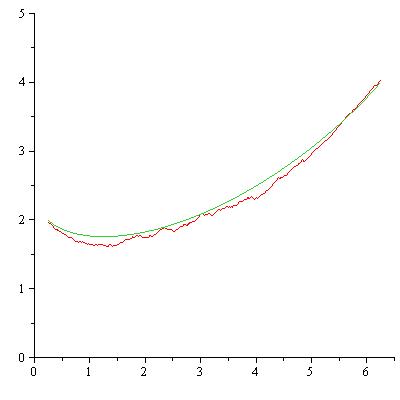}
\caption{The rectangular Young diagram $w_N^Y (x)$ and $\Omega_{\alpha} (x)$ for $\alpha=2.25$ and $N=400$}
\label{separationWishart}
\end{center}
\end{figure}

\begin{remark}
The limit shapes $\Omega_{\alpha} (x)$ are closely connected with Biane's limit shapes (see \cite{Biane}). This connection is described in Section 5.
\end{remark}

This paper is organized as follows. In Section 2 we give some preliminaries. In Section 3 we prove Theorem \ref{th1}. In Section 4 we prove Theorem \ref{th2}. In Section 5 we describe a link between these limit shapes and semicircle and Marchenko-Pastur limit laws.

\textbf{Acknowledgments.}

The author is grateful to G.~Olshanski and A.~Borodin for valuable discussions. The author is grateful to V.~Gorin and L.~Petrov for valuable comments. The author was partially supported by Simons Foundation-IUM scholarship, by Moebius Foundation scholarship, and by RFBR-CNRS grants 10-01-93114 and 11-01-93105.

\section{Continual Young diagrams}
A \emph{continual Young diagram} (see \cite{Ker}) is a function $w(x)$ on $\mathbb R$ such that

1) $|w(x_1) - w(x_2)| \le |x_1-x_2|$ for any $x_1, x_2 \in \mathbb R$.

2) There exists a point $x_0 \in \mathbb R$, called the \emph{center} of $w$, such that $w(x) = |x-x_0|$ when $|x|$ is large enough.

The set of all continual Young diagrams is denoted by $\mathcal D$. For any $w \in \mathcal D$ we define a function $\sigma(x)$
\begin{equation*}
\sigma(x) = \frac{1}{2} (w(x)-|x|).
\end{equation*}
Since $\sigma(x)$ satisfies the Lipschitz condition 1), its derivative $\sigma'(x)$ exists almost everywhere and satisfies $|\sigma'(x)| \le 1$. Note that $\sigma'(x)$ is compactly supported. The function $w(x)$ is uniquely determined by $\sigma'(x)$. Moreover, $w(x)$ is uniquely determined by the second derivative $\sigma''(x)$, which is understood in the sense of distribution theory.

Let us define the function $\tilde p_k: \mathcal D \to \mathbb R$, $k \in \mathbb N$, by setting
\begin{equation*}
\tilde p_k (w) = -k \int_{-\infty}^{\infty} x^{k-1} \sigma'(x) dx =
\int_{-\infty}^{\infty} x^k \sigma''(x) dx.
\end{equation*}

It is easy to see that for the rectangular Young diagram $w^{\{ x_i \},\{ y_j \}} (x)$ (see Introduction) we have
\begin{equation*}
\sigma''(x) = \sum_{i=1}^{n} \delta(x- x_i) - \sum_{j=1}^{n-1} \delta(x - y_j) - \delta(x).
\end{equation*}

We will need the following fact.

\begin{lemma}[\cite{IO}, Lemma 5.7]
\label{topology}
Let $\mathcal F([a;b])$ be the set of all real-valued functions $f(x)$ supported on the interval $[a,b] \in \mathbb R$ and satisfying the Lipschitz condition $|f(x_1)-f(x_2)| \le |x_1-x_2|$.

On the set $\mathcal F([a;b])$, the weak topology defined by the functionals
\begin{equation*}
f(x) \to \int_{x \in [a;b]} f(x) x^k dx, \ \ \ k=0,1,2, \dots,
\end{equation*}
coincides with the uniform topology.
\end{lemma}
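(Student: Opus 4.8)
The plan is to exploit the general topological principle that a continuous bijection from a compact space onto a Hausdorff space is automatically a homeomorphism. Write $\tau_u$ for the uniform topology and $\tau_w$ for the weak topology generated by the moment functionals $L_k(f) = \int_a^b f(x)\, x^k \, dx$, $k=0,1,2,\dots$, on $\mathcal{F}([a;b])$. First I would record compactness. Since every $f \in \mathcal{F}([a;b])$ is $1$-Lipschitz and vanishes outside $[a,b]$, continuity forces $f(a)=f(b)=0$, so $|f(x)| = |f(x)-f(a)| \le |x-a| \le b-a$; thus the family is uniformly bounded, and it is equicontinuous (indeed equi-Lipschitz). As a uniform limit of such functions is again $1$-Lipschitz and supported on $[a,b]$, the set $\mathcal{F}([a;b])$ is closed, and by the Arzel\`a--Ascoli theorem $(\mathcal{F}([a;b]), \tau_u)$ is compact.

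Next I would verify that $\tau_u$ is finer than $\tau_w$, i.e. that the identity map $\iota : (\mathcal{F}([a;b]), \tau_u) \to (\mathcal{F}([a;b]), \tau_w)$ is continuous. This is the easy direction: if $\|f_n - f\|_\infty \to 0$, then for each fixed $k$
\begin{equation*}
|L_k(f_n) - L_k(f)| \le \|f_n - f\|_\infty \int_a^b |x|^k \, dx \longrightarrow 0,
\end{equation*}
so uniform convergence implies convergence of every moment, which is precisely convergence in $\tau_w$.

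The substantive step is to show that $\tau_w$ is Hausdorff on $\mathcal{F}([a;b])$, equivalently that the moment functionals separate points. Suppose $f, g \in \mathcal{F}([a;b])$ satisfy $L_k(f) = L_k(g)$ for all $k \ge 0$; then $\int_a^b (f-g)(x)\, p(x)\, dx = 0$ for every polynomial $p$. By the Weierstrass approximation theorem polynomials are dense in $C([a,b])$ in the uniform norm, and since $f-g$ is continuous on the compact interval $[a,b]$, it follows that $\int_a^b (f-g)\, h = 0$ for every $h \in C([a,b])$, whence $f - g \equiv 0$. Thus $\tau_w$ separates points and is Hausdorff.

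Finally I would conclude: $\iota$ is a continuous bijection from the compact space $(\mathcal{F}([a;b]), \tau_u)$ onto the Hausdorff space $(\mathcal{F}([a;b]), \tau_w)$, hence a homeomorphism, so $\tau_u = \tau_w$. Alternatively one may argue purely with sequences, since both topologies are metrizable on $\mathcal{F}([a;b])$ --- $\tau_w$ because it is generated by countably many uniformly bounded functionals: if $f_n \to f$ in $\tau_w$ but $\|f_n - f\|_\infty \not\to 0$, pass to a subsequence bounded away from $f$ in sup-norm, use Arzel\`a--Ascoli to extract a further uniformly convergent sub-subsequence with limit $g$, observe $g = f$ by the separation property, and reach a contradiction. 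I expect the main point to get right is the separation-of-points step via Weierstrass; the remainder is the standard compactness-versus-Hausdorff dichotomy.
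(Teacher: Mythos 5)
Your proposal is correct and complete: Arzel\`a--Ascoli gives compactness of $\mathcal F([a;b])$ in the uniform topology, the moment functionals are uniformly continuous so the identity map is continuous into the weak topology, Weierstrass approximation shows the moments separate points so the weak topology is Hausdorff, and a continuous bijection from a compact space onto a Hausdorff space is a homeomorphism. The paper itself offers no proof of this lemma --- it simply cites \cite{IO}, Lemma 5.7 --- and your argument is precisely the standard one used there, so this is essentially the same approach as the source.
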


\section{Proof of Theorem \ref{th1}}

\begin{lemma}
\label{lemma31}
Let $w_N^X$ be the (random) rectangular Young diagram defined by the eigenvalues of Wigner matrices $X_N$ and $\hat X_N$ (see Introduction). Then
\begin{equation}
\label{lem3-1}
\frac{\tilde p_k (w_N^X)}{N^{k/2}} \xrightarrow[N \to \infty]{} \begin{cases}
0, \ \ \ \mbox{$k$ is odd,}  \\
\frac{k!}{ (k/2)! (k/2)!}, \ \ \ \mbox{$k$ is even,}
\end{cases}
\end{equation}
where the convergence is in probability.
\end{lemma}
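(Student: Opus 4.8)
The plan is to reduce the lemma to a method-of-moments computation for the traces of powers of $X_N$. Combining the definition of $\tilde p_k$ with the formula for $\sigma''$ of a rectangular Young diagram from Section~2, and using $0^k = 0$ for $k \ge 1$, I would first record that
\[
\tilde p_k(w_N^X) = \sum_i \lambda_i^k - \sum_j \mu_j^k = \tr(X_N^k) - \tr(\hat X_N^k),
\]
where $\lambda_i$ and $\mu_j$ denote the eigenvalues of $X_N$ and of $\hat X_N$. Expanding both traces as sums over closed walks and cancelling the common terms, this equals
\[
\tr(X_N^k) - \tr(\hat X_N^k) = \sum_{\substack{1 \le i_1, \dots, i_k \le N \\ i_\ell = N \text{ for some } \ell}} X_N(i_1,i_2)\,X_N(i_2,i_3)\cdots X_N(i_k,i_1),
\]
since the subtraction removes exactly those walks that remain inside $\{1,\dots,N-1\}$. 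Thus $\tilde p_k(w_N^X)$ is a sum of walk weights $T_\gamma$ over closed walks $\gamma$ of length $k$ on $\{1,\dots,N\}$ that visit the distinguished vertex $N$, and I would establish \eqref{lem3-1} by controlling the first two moments of this sum and invoking Chebyshev's inequality.

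For the expectation I would run the usual Wigner walk count, now with the constraint that $N$ is visited. Since the $Z_{ij}$ have mean zero and are independent up to symmetry, a walk contributes to $\mathbf E$ only if each of its edges is traversed at least twice. For even $k = 2m$ the leading walks traverse each edge exactly twice and have a tree as underlying graph; such a walk visits $m+1$ distinct vertices, and the tree shapes are counted by Dyck paths, giving the Catalan number $C_m$. Requiring that one of these $m+1$ vertices equal $N$ costs a factor of $N$: there are $m+1$ choices for which tree vertex is $N$ and $\sim N^m$ ways to label the remaining vertices by distinct elements of $\{1,\dots,N-1\}$, each nonzero weight being $\mathbf E[Z_{11}^2]^m = 1$. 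Walks with an edge used at least three times, or with a cycle in the underlying graph, visit fewer vertices and contribute $o(N^m)$. Hence $\mathbf E[\tilde p_k(w_N^X)] = (m+1)C_m\,N^m(1+o(1)) = \binom{2m}{m} N^m(1+o(1))$, and dividing by $N^{k/2} = N^m$ yields the asserted limit $\binom{2m}{m} = k!/((k/2)!\,(k/2)!)$. For odd $k$ a tree carries no closed walk of odd length (it is bipartite), so the extremal tree configuration is excluded and every contributing walk through $N$ visits strictly fewer vertices; the resulting expectation is $o(N^{k/2})$ and the normalised mean tends to $0$.

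To upgrade convergence of the mean to convergence in probability I would bound the variance of $A := \tilde p_k(w_N^X) = \sum_\gamma T_\gamma$. In $\operatorname{Var}(A) = \sum_{\gamma_1,\gamma_2}\big(\mathbf E[T_{\gamma_1}T_{\gamma_2}] - \mathbf E[T_{\gamma_1}]\,\mathbf E[T_{\gamma_2}]\big)$ every pair of walks using disjoint edge sets cancels, so only pairs sharing at least one edge survive. For such a pair the union graph is connected (both walks pass through $N$) and carries $2k$ traversals with each edge covered at least twice, hence has at most $k$ distinct edges. The decisive point is that the extremal case --- a tree with exactly $k$ edges, each covered twice, which alone would reach the threshold order $N^k$ --- cannot occur when the walks share an edge: a closed walk traverses every edge an even number of times, so on a shared tree each edge would belong entirely to $\gamma_1$ or entirely to $\gamma_2$, contradicting that an edge is shared. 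Consequently the union has at most $k$ vertices, at most $k-1$ of them free once $N$ is fixed, and $\operatorname{Var}(A) = O(N^{k-1})$, using the finiteness of all moments $\mathbf E|Z_{11}|^j$ to bound the individual weights. Then $\operatorname{Var}(A/N^{k/2}) = O(1/N) \to 0$, and Chebyshev's inequality gives \eqref{lem3-1}. I expect this last combinatorial estimate to be the main obstacle: making the parity/bipartiteness argument rigorous and verifying, uniformly over the finitely many walk shapes, that every edge-sharing configuration loses at least one vertex relative to the extremal tree.
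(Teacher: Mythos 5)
Your proposal is correct and follows essentially the same route as the paper: both reduce $\tilde p_k(w_N^X)$ to $\tr(X_N^k)-\tr(\hat X_N^k)$, rewrite it as the sum over closed walks forced to visit the vertex $N$, identify the leading contribution for even $k$ as Wigner-word/tree walks with the extra factor $k/2+1$ for placing the label $N$ (giving $\binom{k}{k/2}$), and finish with a variance bound plus Chebyshev. The only difference is one of exposition: the paper delegates the mean and variance estimates to \cite[Lemmas 2.1.6, 2.1.7]{AGZ}, whereas you sketch them directly (the bipartiteness argument for odd $k$ and the parity argument excluding the extremal shared tree), which is exactly the content of those cited lemmas.
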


\begin{proof}
The proof of this Lemma closely follows the proof of Wigner's theorem (see, e.g., \cite[Section 2.1]{AGZ}) and is based on the well-known moment method.

Let $\{ \lambda_i^N \}_{i=1}^N$ be the eigenvalues of $X_N$ and let $\{ \lambda_i^{N-1} \}_{i=1}^{N-1}$ be the eigenvalues of $\hat X_N$. We have
\begin{equation*}
\tilde p_k (w_N^X) = \int_{x \in \mathbb R} x^k \left( \sum_{i=1}^N \delta \left( \lambda_i^N \right) - \sum_{j=1}^{N-1} \delta \left( \lambda_j^{N-1} \right) \right) dx = \tr \left(X_N^k \right) - \tr \left( X_{N-1}^k \right).
\end{equation*}
Let $\mathbf i_N = (i_1,i_2, \dots, i_k)$ range over the set of multi-indices such that $1 \le i_1, i_2, \dots, i_k \le N$. Likewise, let $\mathbf i_{N-1} = (i_1,i_2, \dots, i_k)$ range over the set of multi-indices such that $1 \le i_1, i_2, \dots, i_k \le N-1$. Then
\begin{multline*}
\tr \left(X_N^k \right) - \tr \left( X_{N-1}^k \right) = \sum_{\mathbf i_N} X_N(i_1,i_2) X_N(i_2, i_3) \dots X(i_k, i_1) \\ - \sum_{\mathbf i_{N-1}} X_N(i_1,i_2) X_N(i_2, i_3) \dots X(i_k, i_1) = \sum_{\mathbf i_N: N \in \mathbf i_N} X_N(i_1,i_2) X_N(i_2, i_3) \dots X(i_k, i_1),
\end{multline*}
where the last sum is taken over indices $\mathbf i_N = (i_1, i_2, \dots, i_k)$ such that there exists $r$, $1 \le r \le k$, satisfying $i_r=N$.

First, we compute
\begin{equation*}
\mathbf E \left( \sum_{\mathbf i_N: N \in \mathbf i_N} X_N(i_1,i_2) X_N(i_2, i_3) \dots X(i_k, i_1) \right).
\end{equation*}
This sum can be written as a sum of terms corresponding to suitably defined graphs that are in their turn associated to words. Suppose $k$ is odd; then the same estimates as in \cite[Lemma 2.1.6]{AGZ} show that the contribution to the degree $n^k$ is equal to 0.

Suppose $k$ is even; then the main contribution is given by the so called \emph{Wigner words} (see \cite[Def. 2.1.10]{AGZ}). The number of Wigner words is equal to $\dfrac{k!}{(k/2+1)! (k/2)!}$. The only difference of our case with the case of Wigner's theorem is that one vertex of a graph should be labeled by the special symbol $N$. This condition gives an extra factor $(k/2+1)$. Thus, we obtain
\begin{multline}
\label{3-1}
\lim_{N \to \infty} N^{-k/2} \mathbf E \left( \sum_{\mathbf i_N: N \in \mathbf i_N} X_N(i_1,i_2) X_N(i_2, i_3) \dots X(i_k, i_1) \right) \\ = \begin{cases}
0, \ \ \ \mbox{$k$ is odd,}  \\
\dfrac{k!}{ (k/2)! (k/2)!}, \qquad \mbox{$k$ is even.}
\end{cases}
\end{multline}
Secondly, we have
\begin{equation}
\label{3-2}
\lim_{N \to \infty} \left( N^{-k/2} \sum_{\mathbf i_N: N \in \mathbf i_N} X_N(i_1,i_2) X_N(i_2, i_3) \dots X(i_k, i_1) \right)^2 = 0.
\end{equation}
Indeed, this equality can be proved in the same way as in \cite[Lemma 2.1.7]{AGZ}.

From \eqref{3-1} and \eqref{3-2} it follows that $\tilde p_k$ converges to the right-hand side of \eqref{lem3-1} in $L^2$ and, consequently, in probability.

\end{proof}

\begin{lemma}[\cite{IO} Prop. 5.3]
\label{lemma32}
We have
\begin{equation*}
\tilde p_k (\Omega) = \begin{cases} 0, \qquad \mbox{$k$ is odd,}  \\
\dfrac{k!}{ (k/2)! (k/2)!}, \qquad \mbox{$k$ is even.}
\end{cases}.
\end{equation*}
\end{lemma}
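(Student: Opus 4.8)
The plan is to evaluate $\tilde p_k$ directly on the explicit curve $\Omega$ by computing the distribution $\sigma''$ attached to it and recognizing the resulting integral as a moment of the arcsine law on $[-2,2]$.

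First I would record the derivatives of $\Omega$. On $(-2,2)$ a direct differentiation of the given formula produces $\Omega'(x) = \frac{2}{\pi}\arcsin(x/2)$, the two contributions $\frac{x}{\sqrt{4-x^2}}$ coming from $x\arcsin(x/2)$ and from $\sqrt{4-x^2}$ cancelling. Differentiating once more gives
\[
\Omega''(x) = \frac{2}{\pi\sqrt{4-x^2}}, \qquad x \in (-2,2).
\]
Since $\Omega'(2^-) = \frac{2}{\pi}\arcsin(1) = 1 = \Omega'(2^+)$, and likewise at $x=-2$, the function $\Omega'$ is continuous at the endpoints, so $\Omega''$ carries no atoms at $\pm 2$.

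Next I set $\sigma(x) = \frac{1}{2}(\Omega(x)-|x|)$ as in Section 2. Using $\frac{1}{2}\frac{d^2}{dx^2}|x| = \delta(x)$ in the sense of distributions, I obtain
\[
\sigma''(x) = \frac{1}{\pi\sqrt{4-x^2}}\,\mathbf 1_{[-2,2]}(x) - \delta(x),
\]
where the density $\frac{1}{\pi\sqrt{4-x^2}}$ is exactly the arcsine distribution on $[-2,2]$ (its total mass is $1$, as the substitution $x=2\sin\theta$ shows). Substituting into $\tilde p_k(\Omega) = \int_{-\infty}^{\infty} x^k \sigma''(x)\,dx$, the atom at the center contributes $0^k = 0$ for every $k \ge 1$, so
\[
\tilde p_k(\Omega) = \frac{1}{\pi}\int_{-2}^{2} \frac{x^k}{\sqrt{4-x^2}}\,dx.
\]

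Finally I would compute this moment with $x = 2\sin\theta$, $\theta \in [-\pi/2,\pi/2]$, which converts the integral into $\frac{2^k}{\pi}\int_{-\pi/2}^{\pi/2}\sin^k\theta\,d\theta$. For odd $k$ this vanishes by antisymmetry, giving the first case. For $k = 2m$, Wallis' formula $\int_{-\pi/2}^{\pi/2}\sin^{2m}\theta\,d\theta = \pi\,2^{-2m}\binom{2m}{m}$ yields $\tilde p_{2m}(\Omega) = \binom{2m}{m} = \frac{(2m)!}{m!\,m!}$, which is the asserted value $\frac{k!}{(k/2)!\,(k/2)!}$. The only point demanding care is the distributional bookkeeping of $\sigma''$: correctly extracting the $-\delta(x)$ at the center and verifying that no hidden atoms appear at $\pm 2$. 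Once this is settled, the remainder is the routine moment computation above.
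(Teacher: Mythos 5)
Your computation is correct, and it is worth noting that it is genuinely more than what the paper offers: the paper gives no proof of this lemma at all, simply citing Ivanov--Olshanski \cite{IO}, Prop.\ 5.3, where the values $\tilde p_k(\Omega)$ are obtained within Kerov's framework of transition measures and their moment generating functions. Your route is a self-contained elementary verification: differentiating the explicit formula for $\Omega$ to get $\Omega''(x)=\frac{2}{\pi\sqrt{4-x^2}}$ on $(-2,2)$, checking that $\Omega'$ matches $\pm 1$ at $x=\pm 2$ so that $\sigma''$ has no boundary atoms, isolating the $-\delta(x)$ coming from $-\tfrac12|x|''$, and then evaluating the arcsine moments by Wallis' formula. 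All steps check out (a useful sanity check you implicitly have: $\sigma''$ has total mass $1-1=0$, as it must since $\sigma'$ is compactly supported). What the citation buys the paper is brevity and a pointer to the structural fact behind the identity --- in Kerov's theory $\sigma''+\delta$ is the arcsine law precisely because the transition measure of $\Omega$ is the semicircle law (cf.\ Section 5 of the paper); what your computation buys is independence from that machinery, making the lemma verifiable directly from the definition of $\tilde p_k$ and the explicit formula for $\Omega$, with the arcsine distribution emerging as an honest by-product rather than an imported fact.
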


\begin{lemma}
\label{lemOutZero}
There exists an interval $[-B;B]$ such that the probability that the inequalities
\begin{equation*}
-B < \lambda^N_N \le \dots \le \lambda_1^N < B
\end{equation*}
hold tends to 1 as $N \to \infty$.
\end{lemma}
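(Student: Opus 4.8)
The plan is to deduce the statement from the well-known edge asymptotics for Wigner matrices: after the natural rescaling by $\sqrt N$, the extreme eigenvalues converge to the endpoints $\pm 2$ of the semicircle law. Because the entries of $X_N$ have finite moments of all orders, this convergence (in probability, indeed almost surely) is available from \cite[Theorem 2.1.22]{AGZ}, and the lemma will follow by taking $B$ slightly larger than $2$.

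First I would fix the normalization. The points that matter for the eventual application of Lemma \ref{topology} to the rescaled diagram $\frac{1}{\sqrt N} w_N^X(x \sqrt N)$ are the eigenvalues of $X_N/\sqrt N$ and of $\hat X_N/\sqrt N$ together with the origin, so it is these rescaled eigenvalues that must be confined to a fixed interval $[-B;B]$. Accordingly I read the $\lambda_i^N$ in the statement as the eigenvalues of $X_N/\sqrt N$, ordered so that $\lambda_1^N$ is the largest.

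The top eigenvalue I would control by the moment method already used in Lemma \ref{lemma31}. For any even integer $2m$, Markov's inequality gives
\[
\mathbf P\left( \lambda_1^N > 2 + \delta \right) \le (2+\delta)^{-2m}\, \mathbf E\, \tr\left( (X_N/\sqrt N)^{2m} \right).
\]
The graph-and-word estimates of \cite[Lemma 2.1.6]{AGZ} bound $\mathbf E\, \tr\left( (X_N/\sqrt N)^{2m} \right)$ by $N\, C_m$, where $C_m \le 4^m$ is the $m$-th Catalan number, so the right-hand side is at most $N\,\bigl(2/(2+\delta)\bigr)^{2m}$. Letting $m = m(N)$ tend to infinity slowly enough (say $m \sim \log N$ with a suitable constant) drives this probability to $0$. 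Applying the same argument to $-X_N$ controls $\lambda_N^N$ from below, and, since $\hat X_N = X_{N-1}$ is itself a Wigner matrix, the identical estimates confine its eigenvalues to the same interval. Taking $B = 2+\delta$ for any fixed $\delta>0$ then yields the claim.

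The only delicate point is the uniformity of the trace estimate when $m$ is allowed to grow with $N$: one must verify that the contributions neglected in the leading Catalan count remain negligible under this growth. This is exactly where the finiteness of all moments of $Z_{11}$ enters, and it is carried out in the proof of \cite[Theorem 2.1.22]{AGZ}; I would invoke that computation rather than reproduce it.
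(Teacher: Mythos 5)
Your proposal is correct and takes essentially the same route as the paper, whose entire proof is a citation of the standard confinement-of-spectrum result in \cite{AGZ}; your sketch of the moment/Markov argument with $m$ growing like $\log N$, deferring the uniform trace estimate to \cite[Theorem 2.1.22]{AGZ}, is just an expanded version of that same citation. You also correctly resolve the implicit normalization (the $\lambda_i^N$ must be read as eigenvalues of $X_N/\sqrt{N}$), which is how the lemma is actually used in the proof of Theorem \ref{th1}.
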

\begin{proof}
This is a well-known fact from random matrix theory, see, e.g. \cite{AGZ}.
\end{proof}

Let $z_N$ be the center of the rectangular Young diagram $w_N^X$.
It is clear that
\begin{equation*}
\lim_{N \to \infty} \frac{z_N}{\sqrt{N}} = \lim_{N \to \infty} \frac{X(N,N)}{\sqrt{N}} = 0, \qquad \mbox{in probability.}
\end{equation*}
This equality and Lemma \ref{lemOutZero} imply the convergence of $w_N^X$ to $\Omega(x)$ outside the interval $[-B;B]$.

From Lemmas \ref{lemma31} and \ref{lemma32} for any $k \in \mathbb N$ we have
\begin{equation}
\label{eqPk}
\lim_{N \to \infty} \frac{\tilde p_k (w_N^X)}{N^{k/2}} = \tilde p_k (\Omega), \qquad \mbox{in probability.}
\end{equation}
The convergence of $w_N^X$ to $\Omega(x)$ inside the interval $[-B;B]$ follows from \eqref{eqPk} and Lemma \ref{topology}.

\section{Proof of Theorem \ref{th2}}

\begin{lemma}
\label{lemma41}
Let $w_N^Y$ be the (random) rectangular Young diagram defined by the eigenvalues of Wishart matrices $Y_N$ and $\hat Y_N$ (see Introduction). Then the following limit exists
\begin{equation}
\label{lem4-1}
\frac{\tilde p_k (w_N^Y)}{N^k} \xrightarrow[N \to \infty]{} m_k, \qquad \mbox{in probability.}
\end{equation}
The generating function of $m_k$ is given by the equation
\begin{equation*}
G_{\alpha} (z) := 1+\sum_{k=1}^{\infty} m_k z^k = \frac{1}{2} \left( 1+ \frac{(\alpha-1)z +1}{\sqrt{(\alpha-1)^2 z^2 - 2 (\alpha+1) z +1}} \right).
\end{equation*}
\end{lemma}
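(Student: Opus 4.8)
The plan is to follow the proof of Lemma~\ref{lemma31} as closely as possible and then to repackage the resulting limits into the generating function $G_\alpha$. First I would reduce $\tilde p_k(w_N^Y)$ to a trace difference. Exactly as in the Wigner case, the distributional identity $\sigma''(x)=\sum_i\delta(x-\lambda_i^N)-\sum_j\delta(x-\lambda_j^{N-1})-\delta(x)$ gives
\begin{equation*}
\tilde p_k(w_N^Y)=\tr\left(Y_N^k\right)-\tr\left(\hat Y_N^k\right),
\end{equation*}
and since the restriction of $Y_N$ to the first $N-1$ rows and columns equals $\mathcal W_{N-1}\mathcal W_{N-1}^t$, where $\mathcal W_{N-1}$ is $\mathcal W_N$ with its last row deleted, we have $\hat Y_N=\mathcal W_{N-1}\mathcal W_{N-1}^t$; in particular $\hat Y_N$ is again Wishart with the same number $M(N)$ of columns and ratio $M(N)/(N-1)\to\alpha$. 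The convergence in probability in \eqref{lem4-1} then follows by the moment method in complete analogy with Lemma~\ref{lemma31}: writing $\tr(Y_N^k)-\tr(\hat Y_N^k)=\sum_{\mathbf i:N\in\mathbf i}Y_N(i_1,i_2)\cdots Y_N(i_k,i_1)$ and expanding each factor via $Y_N(i_a,i_{a+1})=\sum_{l}\mathcal W_N(i_a,l)\mathcal W_N(i_{a+1},l)$, the leading contribution to the expectation comes from the Wishart analogue of Wigner words --- closed walks on a bipartite tree in which every edge is used twice --- now subject to the extra constraint that one prescribed row vertex carries the label $N$. A variance estimate identical to \cite[Lemma 2.1.7]{AGZ} gives $L^2$, hence in-probability, convergence of $\tilde p_k(w_N^Y)/N^k$ to a deterministic constant $m_k$.

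It remains to identify the generating function of the $m_k$. Here I would use the Schur-complement (Cramer) identity
\begin{equation*}
\left[(w-Y_N)^{-1}\right]_{NN}=\frac{\det(w-\hat Y_N)}{\det(w-Y_N)},
\end{equation*}
whose logarithmic derivative in $w$ is
\begin{equation*}
\frac{d}{dw}\log\left[(w-Y_N)^{-1}\right]_{NN}=\tr(w-\hat Y_N)^{-1}-\tr(w-Y_N)^{-1}=-\sum_{k\ge0}\tilde p_k(w_N^Y)\,w^{-k-1}.
\end{equation*}
Setting $w=N/z$ and passing to $\tilde Y_N=Y_N/N$, whose eigenvalues are of order one, this identity becomes, after sending $N\to\infty$,
\begin{equation*}
G_\alpha(z)=z\,\frac{d}{dz}\log s(1/z),
\end{equation*}
where $s(\zeta)=\lim_{N\to\infty}\left[(\zeta-\tilde Y_N)^{-1}\right]_{NN}$ is the Stieltjes transform of the Marchenko--Pastur law, the common limit of the (exchangeable, concentrating) diagonal resolvent entries of $\tilde Y_N$.

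Finally I would compute $s$ from its self-consistent equation. Applying the Schur complement once more to the single entry $[(\zeta-\tilde Y_N)^{-1}]_{NN}$, using $\tilde Y_N(N,N)\to\alpha$ and the concentration of the quadratic form built from the independent last row of $\mathcal W_N$ against the resolvent of $\hat{\tilde Y}_N$, one obtains in the limit $\zeta s^2-(\zeta+1-\alpha)s+1=0$. Choosing the branch with $s(\zeta)\sim1/\zeta$ at infinity and substituting $\zeta=1/z$ yields
\begin{equation*}
s(1/z)=\tfrac12\Bigl(1+(1-\alpha)z-\sqrt{(\alpha-1)^2z^2-2(\alpha+1)z+1}\Bigr),
\end{equation*}
and differentiating the logarithm and multiplying by $z$ produces exactly the stated formula for $G_\alpha(z)$ (the constant term and the coefficient $m_1=\alpha$ being immediate checks).

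The main obstacle is the identification step, not the convergence: one must justify that $[(\zeta-\tilde Y_N)^{-1}]_{NN}$ converges to the deterministic $s(\zeta)$ and derive the quadratic self-consistent equation, i.e.\ control the concentration of the Schur-complement quadratic form and the exchangeability of the diagonal entries, and then legitimately interchange the limit $N\to\infty$ with the logarithmic derivative and with extraction of the coefficient of $z^k$. A purely combinatorial alternative is available --- marking one row vertex turns the Marchenko--Pastur count $\mu_k=\sum_{\text{trees}}\alpha^{t}$ into $m_k=\sum_{\text{trees}}s\,\alpha^{t}$ (where $s$ and $t$ count the row and column vertices of the tree), which leads to the clean identity $G_\alpha(z)=1+z(\log F)'(z)$ with $F(z)=\sum_k\mu_kz^k$ --- but then the genuine work shifts to evaluating that generating-function identity in closed form.
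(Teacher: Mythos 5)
Your proposal is correct, and its first half coincides with the paper's argument: the reduction $\tilde p_k(w_N^Y)=\tr(Y_N^k)-\tr(\hat Y_N^k)$, the expansion over multi-indices containing the label $N$, the AGZ-style variance bound giving $L^2$ (hence in-probability) convergence, and the combinatorial description of the limit as a sum over bipartite tree walks (equivalently Dyck paths) with one marked row vertex --- this is exactly the paper's $m_k=\sum_{\mathcal D_k}(b(\mathcal D_k)+1)\,\alpha^{a(\mathcal D_k)}$. Where you genuinely diverge is the computation of the generating function. The paper stays purely combinatorial: it introduces a second formal variable $\beta$ weighting the down-steps from even height, derives the first-return recursion \eqref{recEq}, solves the resulting quadratic for $d(z)$, and recovers the marking factor $(b+1)$ via the derivative trick $G_\alpha(z)=\partial_\beta\bigl(\beta d(z)\bigr)\big|_{\beta=1}$. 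You instead argue analytically: Cramer's identity $[(w-Y_N)^{-1}]_{NN}=\det(w-\hat Y_N)/\det(w-Y_N)$, whose logarithmic derivative packages precisely the trace differences $\tilde p_k$, combined with convergence of the diagonal resolvent entry to the Marchenko--Pastur Stieltjes transform; your resulting identity $G_\alpha(z)=z\frac{d}{dz}\log s(1/z)$, with $s$ the root of $\zeta s^2-(\zeta+1-\alpha)s+1=0$ behaving like $1/\zeta$ at infinity, does reproduce the stated closed form $\frac12\bigl(1+\frac{(\alpha-1)z+1}{\sqrt{(\alpha-1)^2z^2-2(\alpha+1)z+1}}\bigr)$, as one checks by rationalizing. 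It is worth noting that your identity is exactly Kerov's transition-measure relation \eqref{KerovTransform} in disguise: $G_\alpha(z)=1+z(\log F)'(z)$ with $F$ the MP moment generating function is equivalent to saying that MP is the transition measure of $\Omega_\alpha$, i.e.\ to Proposition 5.1(b) of the paper. As for what each approach buys: the paper's route is elementary and self-contained --- nothing beyond moment-method estimates is needed, and the $\beta$-trick handles the marking for free --- whereas your route shifts the real work onto the standard but nontrivial random-matrix facts you correctly flag (concentration and deterministic limit of a single diagonal resolvent entry, the Schur-complement self-consistent equation, and the interchange of $N\to\infty$ with coefficient extraction, which is legitimate on the high-probability event of bounded spectrum where all series converge uniformly for small $|z|$). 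In exchange, your argument explains conceptually why the answer takes this form, rather than producing it by a formal-variable manipulation; either version constitutes a complete proof once those standard facts are cited or proved.
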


\begin{proof}
The proof of this Lemma is based on moment method and follows that given in \cite[Exercise 2.1.18]{AGZ}.

Let $\mathbf i_N = (i_1, i_2,\dots, i_N)$, $\mathbf j_N = (j_1, j_2, \dots, j_N)$ range over the set of multi-indices such that $1 \le i_1, i_2, \dots, i_N \le N$, $1 \le j_1, j_2, \dots, j_N \le M(N)$.
We have
\begin{multline*}
\tilde p_k (w_N^Y) = \tr(Y_N^k) - \tr(Y_{N-1}^k) = \sum_{\mathbf i_N: N \in \mathbf i_N} Y_N(i_1,i_2) Y_N(i_2,i_3) \dots Y_N(i_N, i_1) \\ = \sum_{\mathbf i_N, \mathbf j_N : N \in \mathbf i_N} \mathcal W_N (i_1, j_1) \mathcal W_N (i_2, j_1) \mathcal W_N (i_2, j_1) \mathcal W_N (i_2, j_2) \dots \mathcal W_N (i_k, j_k) \mathcal W_N (i_1, j_k),
\end{multline*}
where the condition $N \in \mathbf i_N$ means, as before, that there exists $r$ such that $i_r =N$.

The limit relation
\begin{equation*}
\lim_{N \to \infty} \frac{\mathbf E (\tr(Y_N^k) - \tr(Y_{N-1}^k))^2}{N^{2k}} = 0
\end{equation*}
can be proved in the same way as in \cite[Section 2.1]{AGZ}.
Thus, the only thing we need is to compute the leading term in
\begin{equation}
\label{WHmean}
\mathbf E \left( \sum_{\mathbf i_N, \mathbf j_N : N \in \mathbf i_N} \mathcal W_N (i_1, j_1) \mathcal W_N (i_2, j_1) \mathcal W_N (i_2, j_1) \mathcal W_N (i_2, j_2) \dots \mathcal W_N (i_k, j_k) \mathcal W_N (i_1, j_k) \right).
\end{equation}

Let us recall that a \emph{Dyck path} $\mathcal D_l$ of length $2 l$ is an integer-valued sequence $\{ S_n \}_{0 \le n \le 2l}$ such that $S_0=0$, $S_{2l}=0$, $|S(i)-S(i-1)|=1$ for $1 \le i \le 2l$, and $S(i) \ge 0$ for $0 \le i \le 2l$. Let $a(\mathcal D_l)$ be the number of indices $i$ such that $S(i)-S(i-1)=-1$ and $S(i-1)$ is odd and let $b( \mathcal D_l)$ be the number of indices $i$ such that $S(i)-S(i-1)=-1$ and $S(i-1)$ is even. We have $a(\mathcal D_l)+ b(\mathcal D_l)=l$.

There is a bijective correspondence between Wigner words and Dyck paths (see \cite[Section 2.1]{AGZ}).
It is easy to see that the main contribution to \eqref{WHmean} has order $N^{k}$ and is given by Wigner words of length $2k$ or, equivalently, by Dyck paths of length $2k$. If the sequence $(i_1, j_1, i_2, j_2, \dots, i_k, j_k)$ is a Wigner word then $a(\mathcal D_{k})$ is equal to the number of distinct $j$-indices in the tuple and $b(\mathcal D_{k})+1$ is equal to the number of distinct $i$-indices in the tuple, where $\mathcal D_{k}$ is the corresponding Dyck path. Therefore, each word gives a contribution $(b(\mathcal D_{k})+1) \alpha^{a(\mathcal D_{k})}$ and we obtain
\begin{equation}
\label{eqMk}
m_k = \sum_{\mbox{all } \mathcal D_{k}} (b(\mathcal D_{k}) +1) \alpha^{a(\mathcal D_{k})}.
\end{equation}

Let $\beta$ be a formal variable, and let
\begin{gather*}
d_{r} := \sum_{\mbox{all } \mathcal D_{r}} \beta^{r-a(\mathcal D_{r})} \alpha^{a(\mathcal D_{r})}, \qquad r \ge 1, \qquad d_0=1; \\
e_r := \sum_{\mbox{all } \mathcal D_{r}} \beta^{a(\mathcal D_{r})} \alpha^{r-a(\mathcal D_{r})},
\qquad r \ge 1, \qquad e_0=1.
\end{gather*}
Considering the moment of the first return of Dyck path $D_r$ to zero we obtain
\begin{equation*}
d_r = \alpha \sum_{j=1}^r d_{r-j} e_{j-1}, \qquad e_r = \beta \sum_{j=1}^r e_{r-j} d_{j-1}.
\end{equation*}
Hence,
\begin{equation*}
d_r = \frac{\alpha}{\beta} e_r, \qquad r \ge 1.
\end{equation*}
We obtain
\begin{equation}
\label{recEq}
d_r = \beta \sum_{j=2}^r d_{r-j} d_{j-1} + \alpha d_{r-1}.
\end{equation}
Let $d(z)$ be the generating function of $\{ d_r \}$:
\begin{equation*}
d(z):= 1+ \sum_{r=1}^{\infty} d_r z^r.
\end{equation*}
Using \eqref{recEq}, we get
\begin{equation*}
d(z) = 1 + \beta z d(z)^2 + (\alpha - \beta) z d(z).
\end{equation*}
Solving this equation and choosing the sign from the condition $d(0)=1$, we obtain
\begin{equation*}
d(z) = \frac{1 - (\alpha-\beta)z - \sqrt{((\alpha-\beta)z - 1)^2 - 4 \beta z}}{2 \beta z}.
\end{equation*}
From \eqref{eqMk}, we get
\begin{equation*}
1+ \sum_{k=1}^{\infty} m_k z^k = \frac{\partial}{\partial \beta} \left. \left( \beta d(z) \right) \right|_{\beta=1}.
\end{equation*}
This completes the proof of Lemma \ref{lemma41}.
\end{proof}

For $\alpha >1$ let $\Omega_{\alpha} (x)$ be defined by the formula
\begin{equation*}
\Omega_{\alpha} (x) := \begin{cases}
\dfrac{1}{\pi} \left( (2 \alpha - x) \arcsin \left( \frac{\alpha+1-x}{2 \sqrt{\alpha}} \right) - \arctan \left( \frac{(\alpha-1)^2 - x(\alpha+1)}{(\alpha-1) \sqrt{(x-\alpha)^2 + 2 \alpha +2 x -1}} \right) \right. \\ \left. + \sqrt{2 \alpha + 2 x -1 -(x-\alpha)^2} \right), \qquad \mbox{ for $x \in [\alpha +1 - 2 \sqrt{\alpha}; \alpha+1 + 2 \sqrt{\alpha}]$}; \\
|x-\alpha|, \qquad \mbox{ for $x \in (-\infty;(\alpha+1) - 2 \sqrt{\alpha}] \cup [(\alpha+1) + 2 \sqrt{\alpha}; \infty)$},
\end{cases}
\end{equation*}
and for $\alpha=1$ let
\begin{equation*}
\Omega_1 (x) := \begin{cases}
\dfrac{1}{\pi} \left( (x-2) \arcsin (\dfrac{1}{2} x-1) - \sqrt{4 - (x-2)^2} \right) + \dfrac{x}{2},
\qquad \mbox{for } x \in [0;2]; \\
|x-1|, \qquad \mbox{for } x \in (-\infty; 0] \cup [2; \infty).
\end{cases}
\end{equation*}

It is readily seen that for $\alpha \ge 1$, $\Omega_{\alpha} (x)$ is a continual Young diagram with the center $\alpha$.

\begin{lemma}
\label{omegaAlpha}
The generating function of $\{ \tilde p_k( \Omega_{\alpha}) \}$ is given by the formula
\begin{equation*}
1+ \sum_{k=1}^{\infty} \tilde p_k (\Omega_{\alpha}) z^k = \frac{1}{2} \left( 1+ \frac{(\alpha-1)z +1}{\sqrt{(\alpha-1)^2 z^2 - 2 (\alpha+1) z +1}} \right).
\end{equation*}
\end{lemma}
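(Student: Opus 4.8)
The plan is to reduce the computation of the generating function to the evaluation of a single Cauchy (Stieltjes) transform of an explicit density on the interval $I := [a_-;a_+]$, where $a_\pm := (\alpha+1)\pm 2\sqrt{\alpha} = (\sqrt{\alpha}\pm 1)^2$ are the endpoints appearing in the definition of $\Omega_\alpha$, and then to invoke the classical Cauchy transform of the arcsine distribution. First I would translate the functionals $\tilde p_k$ into integrals against $\Omega''_\alpha$. Since $\Omega_\alpha$ is a continual Young diagram with center $\alpha$, inside $I$ it coincides with the smooth function determined by the prescribed second derivative, while outside $I$ it equals $|x-\alpha|$; by construction its first derivative is continuous across $a_\pm$, so $\Omega''_\alpha$ carries no atoms. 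Writing $\sigma = \tfrac12(\Omega_\alpha - |x|)$ and using $|x|'' = 2\delta$ we get $\sigma'' = \tfrac12\Omega''_\alpha - \delta$, whence for $k\ge 1$
\[
\tilde p_k(\Omega_\alpha) = \int_{\mathbb R} x^k\sigma''(x)\,dx = \frac12\int_I x^k\,\Omega''_\alpha(x)\,dx = \frac12\int_I x^{k-1}\rho(x)\,dx,
\]
where $\rho(x) := x\,\Omega''_\alpha(x) = \dfrac{x+(\alpha-1)}{\pi\sqrt{(a_+-x)(x-a_-)}}$, using the factorization $4\alpha-(x-(\alpha+1))^2 = (a_+-x)(x-a_-)$. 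Summing the series and interchanging summation with integration then gives
\[
1+\sum_{k=1}^\infty \tilde p_k(\Omega_\alpha)z^k = 1 + \frac{z}{2}\,J(z), \qquad J(z) := \int_I \frac{\rho(x)}{1-zx}\,dx,
\]
so it suffices to establish the identity $z\,J(z) = \dfrac{(\alpha-1)z+1}{\sqrt{(\alpha-1)^2z^2-2(\alpha+1)z+1}} - 1$.

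The key step is the evaluation of $J(z)$. Setting $w := 1/z$ and using $1-zx = z(w-x)$, I would split the density as $\rho(x) = 2\alpha\,\mu_0(x) + (x-(\alpha+1))\,\mu_0(x)$, since $x+(\alpha-1) = (x-(\alpha+1)) + 2\alpha$, where $\mu_0(x) := \dfrac{1}{\pi\sqrt{(a_+-x)(x-a_-)}}$ is the (probability) arcsine density on $I$. The essential input is the classical formula
\[
\int_I \frac{\mu_0(x)}{w-x}\,dx = \frac{1}{\sqrt{(w-a_-)(w-a_+)}}, \qquad w\notin I,
\]
with the branch of the root fixed by its $\sim w$ behaviour as $w\to\infty$; one proves it by the substitution $x = (\alpha+1)+2\sqrt{\alpha}\cos\theta$, which reduces it to $\int_0^\pi (A-B\cos\theta)^{-1}\,d\theta=\pi(A^2-B^2)^{-1/2}$. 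The symmetric piece is handled by writing $\frac{x-(\alpha+1)}{w-x} = -1 + \frac{w-(\alpha+1)}{w-x}$ and reusing the same formula together with $\int_I\mu_0 = 1$.

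Substituting back $w=1/z$ and using the elementary relations $a_-+a_+ = 2(\alpha+1)$ and $a_-a_+ = (\alpha-1)^2$, one finds $(w-a_-)(w-a_+) = w^2-2(\alpha+1)w+(\alpha-1)^2 = Q(z)/z^2$ with $Q(z) := (\alpha-1)^2z^2-2(\alpha+1)z+1$, together with $w-(\alpha+1) = (1-(\alpha+1)z)/z$. Assembling the two transforms, the powers of $z$ cancel and the numerator collapses via $2\alpha z + (1-(\alpha+1)z) = 1+(\alpha-1)z$, yielding
\[
J(z) = \frac1z\left(\frac{(\alpha-1)z+1}{\sqrt{Q(z)}}-1\right),
\]
so that
\[
1+\frac z2\, J(z) = \frac12\left(1+\frac{(\alpha-1)z+1}{\sqrt{Q(z)}}\right),
\]
which is exactly the asserted formula.

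The only genuine subtleties, and what I expect to be the main (though entirely elementary) obstacle, are the correct evaluation of the arcsine Cauchy transform and the bookkeeping of the square-root branch, so that $\sqrt{(w-a_-)(w-a_+)} = \sqrt{Q(z)}/z$ holds with the right sign; this is pinned down by matching the behaviour at $z\to 0$, where $J(0) = \int_I\rho = 2\alpha$ follows from $\int_I\mu_0 = 1$ and the vanishing of $\int_I(x-(\alpha+1))\mu_0$ by symmetry. The interchange of sum and integral is justified for small $|z|$ because $I$ is compact. Finally, the computation is uniform in $\alpha\ge 1$: the degenerate case $\alpha=1$, where $a_-=0$ and $Q(z)=1-4z$, needs no separate treatment, since $\rho$ stays integrable at the left endpoint. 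In particular the resulting generating function coincides with $G_\alpha(z)$ from Lemma \ref{lemma41}, so $\tilde p_k(\Omega_\alpha)=m_k$ for all $k$.
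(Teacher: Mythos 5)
Your proposal is correct and follows essentially the same route as the paper: reduce $\tilde p_k(\Omega_\alpha)$ to the moments $\frac12\int x^k\,\Omega''_\alpha(x)\,dx$ and then identify the generating function as a Cauchy--Stieltjes transform of the density $\Omega''_\alpha$. The only difference is one of detail, not of method: the paper disposes of the transform computation by citing ``the technique of Stieltjes transform'' (AGZ, Section 2.4), whereas you carry it out explicitly via the decomposition $x\,\Omega''_\alpha = \bigl(2\alpha + (x-(\alpha+1))\bigr)\mu_0$ and the classical arcsine transform, with the branch fixed by the $z\to 0$ behaviour.
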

\begin{proof}
We recall that
\begin{equation*}
\tilde p_k (\Omega_{\alpha}) = \int_{\mathbb R} x^k \frac{\Omega''_{\alpha} (x)}{2} d x, \qquad k \in \mathbb N.
\end{equation*}
It can be verified by a direct computation that for $\alpha \ge 1$
\begin{equation*}
\Omega''_{\alpha} (x) = \begin{cases}
\dfrac{x+(\alpha - 1)}{\pi x \sqrt{ 4 \alpha - (x-(\alpha+1))^2}}, \qquad x \in [\alpha +1 - 2 \sqrt{\alpha}; \alpha+1 + 2 \sqrt{\alpha}], \\
0, \qquad x \ne [\alpha +1 - 2 \sqrt{\alpha}; \alpha+1 + 2 \sqrt{\alpha}].
\end{cases}
\end{equation*}
Using the technique of Stieltjes transform (see, e.g., \cite[Section 2.4]{AGZ}) we obtain the statement of the Lemma.
\end{proof}

Let $z_N^Y$ be the center of the diagram $w_N^Y$. Note that
\begin{equation*}
\frac{z_N^Y}{N} = \frac{Y_N (N,N)}{N} = \frac{\sum_{j=1}^N \mathcal W(N, j) \mathcal W(j, N)}{N} \xrightarrow[N \to \infty]{} \alpha, \qquad \mbox{in probability.}
\end{equation*}
This fact implies that the center of $\frac{1}{N} w_N^Y(N x)$ converges to the center of $\Omega(\alpha)$. From Lemmas \ref{lemma41} and \ref{omegaAlpha} we have
\begin{equation*}
\frac{\tilde p_k (w_N^Y)}{N^k} \xrightarrow[N \to \infty]{} \tilde p_k (\Omega_{\alpha}), \qquad \mbox{in probability.}
\end{equation*}
Also it is known that the spectrum of Wishart matrix is supported by a fixed compact interval with probability close to 1.
Theorem \ref{th2} follows from these facts in the same way as in the proof of Theorem \ref{th1}.

\section{Connection with semicircle and Marchenko-Pastur laws}
In this Section we briefly describe a connection between limiting shapes $\Omega$, $\Omega_{\alpha}$ and well-known semicircle and Marchenko-Pastur distributions, respectively.

For an interval $I$ by $\mathcal M(I)$ we denote the set of probability measures which are supported by $I$. For a measure $\mu \in \mathcal M(I)$ let
\begin{equation*}
\mu_k := \int_I x^k d \mu, \qquad k \in \mathbb N.
\end{equation*}
By $\mathcal D(I)$ we denote the set of continual Young diagrams such that $\sigma'' (x)$ is supported by $I$.

\begin{lemma}
There is a bijective correspondence $\mu \to w$ between $\mathcal M(I)$ and $\mathcal D(I)$.
It is characterized by the relation
\begin{equation}
\label{KerovTransform}
1+ \sum_{k=1}^{\infty} \mu_k z^k = \exp \left( \sum_{k=1}^{\infty} \frac{\tilde p_k (w)}{k} z^k \right).
\end{equation}
\end{lemma}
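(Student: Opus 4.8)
The plan is to recognise \eqref{KerovTransform} as the Markov--Krein (Cauchy) transform of Kerov's theory and to prove the bijection by first treating the finitely supported case and then passing to a limit. The first step is to rewrite both sides analytically in the formal variable $z$. For $\mu \in \mathcal M(I)$ the left-hand side is the moment generating function $M_\mu(z) = \int_I (1-xz)^{-1}\, d\mu(x)$, which encodes the whole moment sequence and, after $z \mapsto 1/z$, becomes the Cauchy transform $G_\mu$. For $w \in \mathcal D(I)$ one expands $\sum_{k\ge1}\frac{\tilde p_k(w)}{k}z^k = \int_I\sum_{k\ge1}\frac{(xz)^k}{k}\,\sigma''(x)\,dx = -\int_I \log(1-xz)\,\sigma''(x)\,dx$, so the right-hand side equals $\exp\left(-\int_I \log(1-xz)\,\sigma''(x)\,dx\right)$. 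Thus \eqref{KerovTransform} reads $M_\mu(z) = \exp\left(-\int_I \log(1-xz)\,\sigma''(x)\,dx\right)$.

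At the purely formal level the correspondence is obviously a bijection of sequences: $\{\mu_k\}$ and $\{\tilde p_k(w)\}$ determine each other through the $\exp$ and $\log$ of a power series with constant term $1$, via universal Newton-type polynomial identities. Hence injectivity of $\mu \mapsto w$ is automatic, and the entire content is to show that the formal sequence produced from a genuine $w \in \mathcal D(I)$ is the moment sequence of an actual probability measure on $I$, and symmetrically in the other direction.

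I would establish this first for rectangular diagrams. If $w = w^{\{x_i\},\{y_j\}}$, then $\sigma'' = \sum_i \delta_{x_i} - \sum_j \delta_{y_j} - \delta_0$, and the $\delta_0$ contributes nothing to \eqref{KerovTransform} since $\log(1-0\cdot z)=0$, so the right-hand side collapses to the rational function $\prod_j (1-y_j z)\big/\prod_i (1-x_i z)$. As its numerator has degree $n-1$ and its denominator degree $n$, a partial fraction decomposition gives $\sum_i c_i (1-x_i z)^{-1}$ with $c_i = \prod_j (x_i-y_j)\big/\prod_{i'\ne i}(x_i - x_{i'})$; setting $z=0$ yields $\sum_i c_i = 1$, while the interlacing inequalities \eqref{interlace} force the numerator and denominator of each $c_i$ to carry the same sign $(-1)^{i-1}$, so $c_i>0$. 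Thus $\mu = \sum_i c_i \delta_{x_i}$ is a probability measure supported on $\{x_i\}\subset I$ realising \eqref{KerovTransform}, and conversely every finitely supported $\mu\in\mathcal M(I)$ produces, via the roots of $M_\mu$, an interlacing family and hence a rectangular diagram in $\mathcal D(I)$.

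The general case follows by approximation. Rectangular diagrams are dense in $\mathcal D(I)$ in the uniform topology, and by Lemma \ref{topology} applied to $f=\sigma'$ this coincides with convergence of all functionals $\int x^{k-1}\sigma'(x)\,dx$, i.e. of all $\tilde p_k$. Given $w \in \mathcal D(I)$, choose rectangular $w_n \to w$ and let $\mu_n$ be the associated atomic measures; then $\tilde p_k(w_n)\to \tilde p_k(w)$ for every $k$, whence by the formal exp-relation the moments $\mu^{(n)}_k$ converge for every $k$. Since the $\mu_n$ are probability measures on the fixed compact $I$, weak-$*$ compactness extracts a limit $\mu\in\mathcal M(I)$, and on a compact interval weak convergence upgrades to convergence of all moments, so the limiting moments are exactly the $\mu_k$ prescribed by \eqref{KerovTransform}; as a compactly supported measure is determined by its moments, $\mu$ is unique and well defined. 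The same argument with the roles of $\mu$ and $w$ exchanged, using that $\tilde p_1(w)$ equals both the center of $w$ and the mean of $\mu$, supplies the inverse map. The main obstacle is precisely this realizability step: the exponential relation is transparently a bijection between \emph{formal} sequences, and what must be secured is that it sends positive measures to bona fide diagrams and back. The positivity of the weights $c_i$ in the finite case, together with its stability under the weak-$*$ limit, is exactly what makes the correspondence genuine rather than merely formal.
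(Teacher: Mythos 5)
The paper does not actually prove this lemma: its entire ``proof'' is the citation ``See \cite{Ker}, \cite{Ker3}.'' Your argument supplies what the paper omits, and it is, in its essentials, a correct reconstruction of Kerov's own proof of the Markov--Krein correspondence: for a rectangular diagram the right-hand side of \eqref{KerovTransform} collapses to the rational function $\prod_j(1-y_jz)\big/\prod_i(1-x_iz)$, partial fractions together with the interlacing inequalities \eqref{interlace} yield weights $c_i\ge 0$ summing to $1$ (your sign count $(-1)^{i-1}$ for numerator and denominator is right), and the general case follows by density of rectangular diagrams, compactness, and moment determinacy on a compact interval. Two points in the limiting step need repair, though neither is fatal. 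First, Lemma \ref{topology} cannot be applied to $f=\sigma'$: that function is only bounded and measurable, not Lipschitz, so it does not lie in $\mathcal F([a;b])$; the correct move is to integrate by parts once more and apply the lemma to $\sigma$ itself, keeping track of boundary terms --- note that $\sigma$ is \emph{not} compactly supported when the center $z_0\neq 0$ (one has $\sigma(\pm\infty)=\mp z_0/2$), so $\tilde p_k$ acquires boundary contributions, which however also converge under uniform convergence of the diagrams. Second, the inverse direction, which you dispatch with ``the same argument with the roles exchanged,'' is where the real work of surjectivity sits: given discrete $\mu_n\to\mu$ weakly and the associated rectangular diagrams $w_n$, you must extract a uniformly convergent subsequence (Arzel\`a--Ascoli applies, since the $w_n$ are $1$-Lipschitz with centers in the convex hull of $I$) and verify that the limit function still lies in $\mathcal D(I)$; this is routine but should be said explicitly, since it is exactly the ``stability under the weak-$*$ limit'' you invoke. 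With those two repairs your proof is complete and matches the route of the references the paper points to.
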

\begin{proof} See \cite{Ker}, \cite{Ker3}. \end{proof}
The measure $\mu$ is called the \emph{transition measure} of $w$. For more details about this correspondence, see \cite{Ker}, \cite[Section 8]{IO}.

A direct computation of the left- and right-hand sides of \eqref{KerovTransform} leads to the following Proposition.

\begin{proposition}

a) The semicircle law is the transition measure of $\Omega$.

b) The Marchenko-Pastur distribution with parameter $\alpha$ is the transition measure of $\Omega_{\alpha}$.
\end{proposition}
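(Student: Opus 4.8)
The plan is to exploit the fact that the correspondence $\mu \leftrightarrow w$ of the preceding Lemma is completely characterized by the generating-function identity \eqref{KerovTransform}. Thus, to show that a given measure is the transition measure of $\Omega$ (resp. $\Omega_{\alpha}$), it suffices to compute the right-hand side $\exp\left(\sum_{k\ge1}\frac{\tilde p_k(w)}{k}z^k\right)$ from the already-known functionals $\tilde p_k$ and to check that the resulting formal power series agrees, coefficient by coefficient, with $1+\sum_{k\ge1}\mu_k z^k$. Since the correspondence is a bijection, exhibiting the claimed moments as satisfying \eqref{KerovTransform} settles the matter. The relevant $\tilde p_k$ are already supplied: for $\Omega$ by Lemma \ref{lemma32}, and for $\Omega_{\alpha}$ by the generating function of Lemma \ref{omegaAlpha}, which I will use directly.

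For part a), I would start from Lemma \ref{lemma32}, which gives $\tilde p_{2m}(\Omega)=\binom{2m}{m}$ and $\tilde p_{2m+1}(\Omega)=0$. Integrating the elementary identity $\sum_{m\ge0}\binom{2m}{m}x^m=(1-4x)^{-1/2}$ term by term yields
\[
\sum_{m\ge1}\frac{1}{2m}\binom{2m}{m}z^{2m}=-\log\frac{1+\sqrt{1-4z^2}}{2}.
\]
Exponentiating and rationalizing gives $\frac{2}{1+\sqrt{1-4z^2}}=\frac{1-\sqrt{1-4z^2}}{2z^2}$, which is precisely $\sum_{m\ge0}C_m z^{2m}$ with $C_m=\frac{1}{m+1}\binom{2m}{m}$ the Catalan numbers. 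As these are the (even) moments of the semicircle law on $[-2,2]$, the odd moments vanishing, \eqref{KerovTransform} identifies the transition measure of $\Omega$ with the semicircle law.

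For part b), I would use the closed form of the generating function of $\{\tilde p_k(\Omega_{\alpha})\}$ from Lemma \ref{omegaAlpha}. Rather than integrating $\frac{G_{\alpha}(z)-1}{z}$ and exponentiating directly, which produces nested radicals and a delicate constant of integration, I would posit the candidate
\[
F(z)=\frac{1-(\alpha-1)z-\sqrt{(\alpha-1)^2z^2-2(\alpha+1)z+1}}{2z}
\]
for the Marchenko-Pastur moment generating function $1+\sum_{k\ge1}\mu_k z^k$, and verify two things. First, that $F$ really is the moment generating function of the Marchenko-Pastur law with parameter $\alpha$: this follows by recognizing $F(z)=z^{-1}S_{\mu}(z^{-1})$, where $S_{\mu}$ is the explicitly solvable Stieltjes transform of that law, whose branch points $(\sqrt{\alpha}\pm1)^2$ are exactly $1/z_{\mp}$ for the two roots $z_{\pm}$ of the quadratic under the radical. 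Second, and this is the crux, that $z\,F'(z)/F(z)=G_{\alpha}(z)-1$; once the single radical $\sqrt{(\alpha-1)^2z^2-2(\alpha+1)z+1}$ is isolated and squared, this collapses to a polynomial identity. Together these say exactly that $\log F(z)=\sum_{k\ge1}\frac{\tilde p_k(\Omega_{\alpha})}{k}z^k$, so by \eqref{KerovTransform} the transition measure of $\Omega_{\alpha}$ is the Marchenko-Pastur law.

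The main obstacle is part b): the explicit antidifferentiation of $\frac{G_{\alpha}(z)-1}{z}$ is unpleasant, and correctly fixing the sign of the square root and the integration constant (so that $F(0)=1$) requires care. The logarithmic-derivative reformulation above is what makes the computation manageable, replacing an integration over radicals by a rational verification. Part a) is routine by comparison, amounting to a single generating-function manipulation. A useful sanity check throughout is to match low-order coefficients, $\mu_1=\alpha$ and $\mu_2=\alpha(\alpha+1)$, which reproduce the correct Marchenko-Pastur mean $\alpha$ and variance $\alpha$ and simultaneously pin down the branch.
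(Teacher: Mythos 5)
Your proposal is correct and is essentially the paper's own argument: the paper proves the Proposition by exactly this "direct computation of the left- and right-hand sides of \eqref{KerovTransform}," using the moment data of Lemma \ref{lemma32} for $\Omega$ and the generating function of Lemma \ref{omegaAlpha} for $\Omega_{\alpha}$ (the paper merely asserts the computation, which you carry out in full; your logarithmic-derivative identity $zF'(z)/F(z)=G_{\alpha}(z)-1$ checks out).
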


The item a) was first noted in \cite{Ker}.

In \cite{Biane} Biane introduced a family of curves which appeared as scaling limits in some problem of asymptotic representation theory. As shown in \cite{Mel}, the transition measure of every Biane's curve coincide with a Marchenko-Pastur distribution, within a homothetic transform. Thus the Biane's curves and the curves $\Omega_{\alpha}$ are closely related.

\begin{remark}
Let $\mathbf x_n = ( x_1, x_2, \dots, x_n)$ be a sequence of real numbers and suppose that for every $n$ the sequences $\mathbf x_n$ and $\mathbf x_{n-1}$ interlace. Let $\mu$ be a probability measure on $\mathbb R$ and let $\delta(x)$ be the Dirac measure at $x \in \mathbb R$. Assume that
\begin{equation*}
\frac{1}{n} \sum_{i=1}^n \delta(x_i) \xrightarrow[n \to \infty]{} \mu, \qquad \mbox{in the weak topology.}
\end{equation*}
It was shown in \cite{Ker1} that there are sequences with different limiting measures $\mu$ but with
the same scaling limit of $w^{\mathbf x_n, \mathbf x_{n-1}}$.
Therefore, the convergence of $\frac{1}{n} \sum_{i=1}^n \delta(x_i)$ to the measure $\mu$ does not imply the convergence of $w^{\mathbf x_n, \mathbf x_{n-1}}$
to the continual Young diagram with the transition measure $\mu$.
\end{remark}

\end{document}